\title{How does Noise Induce Order?}
\author{Isaia Nisoli}
\address{Universidade Federal do Rio de Janeiro - Instituto de Matemática - Av. Athos da Silveira Ramos, 149 - Edifício do Centro de Tecnologia, Bloco C (Térreo) - Cidade Universitária}
\address{Department of Mathematics, Hokkaido University, N10 W8, Kita-ku, Sapporo 001-0010, Japan}
\address{RIES, Hokkaido University, N20 W10, Kita-ku, Sapporo 001-0020, Japan}
\email{nisoli@im.ufrj.br, isaia.nisoli@es.hokudai.ac.jp}
\keywords{Noise-induced  order; Random  dynamical  systems; Non-uniformly hyperbolic; Unimodal maps; Lyapunov exponents}
\subjclass{MSC: Primary: 37H05, Secondary:  37C30, 37A30, 37D25, 37H15}
\newcommand{\Var}{\textrm{Var}}
\newcommand{\Lip}{\textrm{Lip}}
\newcommand{\esssup}{\textrm{esssup}}
\newtheorem{theorem}{Theorem}[subsection]
\newtheorem{lemma}[theorem]{Lemma}
\newtheorem{conjecture}[theorem]{Conjecture}
\newtheorem{corollary}[theorem]{Corollary}
\newtheorem{definition}[theorem]{Definition}
\theoremstyle{remark}
\newtheorem{remark}[theorem]{Remark}
\begin{document}

\begin{abstract}
In this paper we present a general result with an easily checkable condition
that ensures a transition from chaotic regime to regular regime in random dynamical systems
with additive noise.
We show how this result applies to a prototypical family of 
nonuniformly expanding one dimensional dynamical systems,
showing the main mathematical phenomenon behind Noise-induced Order.

\smallskip
\noindent \keywordsname: Noise-induced order, Random dynamical systems, Non-uniformly hyperbolic,
Unimodal maps, Lyapunov exponents
\end{abstract}

\maketitle

\section{Introduction}
This article deals with the behavior of one dimensional nonuniformly 
hyperbolic systems with random additive noise.

A nonuniformly hyperbolic one dimensional dynamical system is a dynamical system in which
expansion and contraction coexists; the behavior of such a system 
is a delicate balance between how often the orbits of such a system 
visit the expanding and contracting region.
Such a system is called non uniformly expanding when the system visits more often 
the expanding region of the phase space.

Such balance may depend in non-trivial ways from a parameter: classical examples of unimodal 
maps, as the quadratic family, have a dense subset of parameters for which the deterministic
dynamic presents an attracting periodic orbit, called \textbf{regular} parameters and a 
positive Lebesgue measure Cantor set of parameters for which the dynamic shows chaotic behavior
called \textbf{stochastic} parameters. 

In this paper we will study a generalization of the quadratic family, allowing
the order of the critical point to vary, the family
\[
T_{\alpha, \beta}(x) = 1-2\beta |x|^{\alpha};
\]
these are symmetric unimodal maps, defined on $[-1,1]$, for $\alpha\in[2, +\infty)$, $\beta \in (0,1]$.

We will study the behavior under iterations of these maps with the addition
of a random noise at each iteration step, i.e.,
\[
X_{n+1} = T_{\alpha, \beta}(X_n)+\Omega_{\xi},
\]
where $\Omega_{\xi}$ is a random variable which takes values in $[-\xi, \xi]$
with density 
\[
\rho_{\xi}(x) = \frac{1}{\xi}\rho\left(\frac{x}{\xi}\right),
\]
where $\rho$ is a positive $BV$ density on $[-1,1]$.
We call $\xi$ the \textbf{amplitude} of the noise;
we denote the points of the orbits with a capital $X$ to stress the fact that they 
are random variables.
This is called a \textbf{random dynamical system with additive noise of amplitude $\xi$}.

We will show that when $\beta=1$ and $\alpha$ is bigger than a computable constant $\tilde{\alpha}>2.67835$\footnote{ the value of $\tilde{\alpha}$ 
is contained in $[2.67834, 2.67835]$, therefore, our result does 
not apply to the case $\alpha=2$, the quadratic family}
as the noise amplitude increases, the system transitions from a chaotic behavior
to an ordered behavior; this transition is measured quantitatively
by a transition of the Lyapunov exponent associated to the stationary measure
from positive to negative.

This surprising phenomenon is called in the literature Noise Induced Order and 
was first observed in numerical simulations of a model of the Belosouv-Zhabotinsky
reaction \cite{MT}, called the BZ map; a proof of its 
existence for the BZ map was given recently in  \cite{GaMoNi}.

In this paper we show the main mechanism behind this phenomenon;
the presence of noise changes the statistical properties of the dynamical system, in particular,
if we start with a non-uniformly expanding map, adding noise may break 
the delicate balance between expansion and contraction, and the average long term 
behavior changes from expanding to contracting.

Many Noise Induced phenomena \cite{Yu1, Yu2} are of strong interest for the  applied mathematical 
community and in general for applied sciences but until recently they have not woken the interest of 
the dynamical system community. Important results
have been reached in \cite{BY1, BY2}, in the study of the Henon and the Standard map with noise.

In figure \ref{fig:Lyap5} we have a plot of some numerical experiments on the family 
$T_{\alpha, \beta}$,
where fixed $\beta=1$, for each exponent $\alpha$ (in the vertical axis)
and noise amplitude $\xi$ (in the horizontal axis) we compute $200$ orbits of length $10000$, 
each one with a randomly chosen starting point and different random
realizations of the noise, and compute the average of $\ln(|T'(x)|)$ with 
respect to the length of the orbit, and take the average of these 
Birkhoff averages; the rationale behind this is that supposing that the simulated system 
satisfies some type of Central Limit Theorem
the mean of the finite time Birkhoff averages of all these orbits is a a better estimator of
the Lyapunov exponent than the average along an individual orbit.

This plot hints that Noise Induced Order may be present in the family $T_{\alpha, \beta}$;
on the left side of the plot, which presents the value of the estimator when the
noise amplitude is $0$ the estimator is positive.
On the right side of the plot, for noise amplitude $1.0$, we can see that if
the order of the critical point is big enough the estimator is negative.

More complex behavior can be conjectured from this plot: 
there are values of $\alpha$ for which we can observe multiple sign changes,
but the results in this paper only allow us to prove the existence of one transition.
The existence of multiple transitions could be proved by using Computer Aided Tools 
as the ones used in \cite{GaMoNi, ChiGaNiSa}.

\begin{figure}\label{fig:dynlyap}
\begin{subfigure}{.475\textwidth}
\includegraphics[width=60mm,keepaspectratio=true]{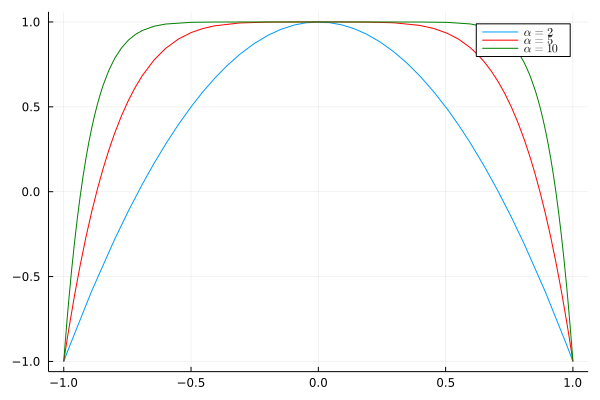}
\caption{$T_{\alpha, 1}$ for $\alpha=2, 5, 10$} \label{fig:dynamic}\
\end{subfigure}
\begin{subfigure}{.475\textwidth}
\includegraphics[width=60mm,keepaspectratio=true]{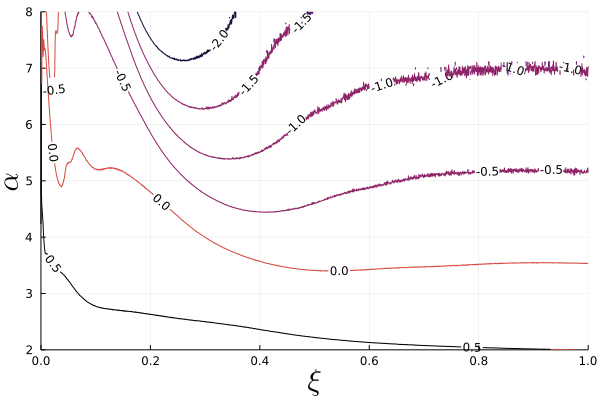}
\caption{Lyapunov exponent in function of noise amplitude $\xi$ 
and order of the singularity $\alpha$ \label{fig:Lyap5}}
\end{subfigure}
\caption{The family $T_{\alpha, 1}$ and estimate of its Lyapunov exponent
as $\alpha$ and the noise size $\xi$ vary}
\end{figure}

The article \cite{MT} has been highly influential in the applied sciences; we think
that a sufficient condition for the existence of this phenomena is extremely interesting
both for the researchers in dynamical systems, since we present a wide family of examples 
whose deterministic behavior and behavior under the action of noise are different, 
and for the applied scientists, since this sufficient condition is easily checkable.

Our paper shows that Noise Induced Order is strongly linked with nonuniformly 
hyperbolic dynamics and that the existence of this kind of phenomena stresses 
the importance of the study of random dynamical systems beyond stochastic stability.
We think that our paper will contribute to show the richness of the behavior of 
dynamical systems with additive noise.

Our choice of the title is a direct answer to \cite{MT}; indeed, from the 
results of \cite{GaMoNi} and the results in the current paper, 
we can assert that we found the main mechanism for Noise Induced Order in the 
$1$-dimensional case.
Please remark that to apply the techniques in the present article to the BZ map 
from \cite{GaMoNi} we need a computer assisted step:
the BZ map does not fit in our framework to study stochastic stability for 
nonuniformly hyperbolic maps but our argument works once positive Lyapunov exponent and contraction of the space of average $0$ functions 
is proved for a small noise amplitudes, 
which is the difficult part of \cite{GaMoNi} and the main computer aided estimate.

\subsection{Statement of the results}

In this paper we prove that, under some assumptions, for all noise amplitudes
$\xi>\xi_0$ the random dynamical system 
has a unique ergodic absolutely continuous stationary measure $\mu_{\xi}$.

The Birkhoff Ergodic Theorem for Random Dynamical Systems tells us that, for a fixed 
noise size $\xi$, for $\mu_\xi$-a.e. initial condition $x_0$ and for almost all 
noise realizations we have that
\[
\lim_{n\to +\infty}\frac{1}{n}\sum_{i=0}^n \ln(|T'(X_i)|) = \int \ln(|T'|) d\mu_{\xi}.
\]

As $\xi$ varies we are interested in the behavior of the Lyapunov exponent
as a function of noise amplitude; remark that in the next formula, as $\xi$ varies 
$\mu_{\xi}$ is varying:
\[
\lambda(\xi) = \int \ln(|T'|) d\mu_{\xi};
\]
as in \cite{GaMoNi}, we define Noise Induced Order as follows.

\begin{definition}
We say that a system exhibits \textbf{Noise Induced Order} if there exist
$0<\xi_1<\xi_2$ such that for all $\xi\geq \xi_1$ the 
system has a unique stationary measure with density $f_{\xi}$ 
and the Lyapunov exponent of the stationary measure transitions continuously 
from positive to negative, i.e.,
$\lambda(\xi_1)>0$, $\lambda(\xi_2)<0$.
\end{definition}
\begin{remark}
There is an ongoing discussion in the community on the ``right'' definition 
of Noise Induced Order; in \cite{MT} are indicated:
\begin{itemize}
\item sharpening of power spectrum,
\item abrupt decrease of entropy,
\item appearance of negative Lyapunov number,
\item localization of orbit.
\end{itemize}
In \cite{GaMoNi} the existence of a transition for the Lyapunov exponent from positive
to negative was used as a definition of Noise Induced Order.
The continuity argument in the present paper shows that there exists a ``big'' noise amplitude
such that for all noise amplitudes bigger than this given noise amplitude, the Lyapunov exponent
is negative.
\end{remark}
\begin{remark}
The definition allows a deterministic map with negative Lyapunov exponent 
to show noise induced order: a regular parameter under the action of noise may 
show a transition to positive Lyapunov exponent for a small noise amplitude and 
a negative Lyapunov exponent for a larger noise amplitude.
\end{remark}

Our method to prove the existence of this transition is quite general and it follows from two simple
observations: the first one is that once contraction of the space of average $0$ functions in $BV$ (
also called exponential decay of correlations in $BV$) is proved for a noise
size $\xi_0$, the Lyapunov exponent is continuous with respect to $\xi$ for all $\xi>\xi_0$.

The second one is that as the noise amplitude grows, the density of the stationary measure becomes uniform,
and therefore, the limiting behavior of the Lyapunov exponent is the average of $\ln(|T'|)$
with respect to the uniform density on $[-1,1]$.

Many results on stochastic stability have been proved \cite{ArPaPi, AlAr03, AlVil13, Shen} 
that hint on the direction that positive Lyapunov exponent
may imply stochastic stability (see also the conjecture in \cite{ViaCo}).
Therefore we would like to state the following conjecture.
\begin{conjecture}\label{conj:NIO}
Let $T:I\to I$ be a piecewise $C^1$ dynamical system, nonsingular with respect to Lebesgue measure, 
which admits a unique 
absolutely continuous invariant measure with positive Lyapunov exponent; if
\[
-\infty<\int_{-1}^1 \ln(|T'|)\frac{dm}{2}<0
\]
then the associated random dynamical system with additive noise with bounded variation density
presents Noise Induced Order.
\end{conjecture}
\begin{remark}
The uniform density on $[-1,1]$ is $1/2$, which is the reason why many 
$1/2$ appears in the hypothesis above and in the conditions below.

While this notation is unneccessary, we would like to state the conditions 
in this form, to stress the mechanism underlying the transition.
\end{remark}

While we cannot prove this conjecture in its full generality, due to the technical difficulties 
involved in proving stochastic stability in a general setting, 
in this paper we prove the following theorem; please note that the hypothesis 
denoted by D are hypothesis on the deterministic system, while 
the hypothesis denoted by R are hypothesis on the associated random dynamical system
with additive noise.

\begin{theorem}\label{thm:result}
Let $T:[-1,1]\to [-1,1]$ be a piecewise $C^1$ nonsingular dynamical system such that
\begin{enumerate}
     \item[D1] admits a unique absolutely continuous invariant probability measure $\mu_0$ with density $f_0$,
     \item[D2] $\int_{-1}^1 \ln(|T'|)d\mu_0>0$,
     \item[D3] $\ln(|T'|) \in L^p$ for some $p>1$.
\end{enumerate}
Let $\mu_{\xi}$ be a fixed point for $L_{\xi}$, the annealed transfer operator (Defined in \ref{def:annealed}) and let
\[
\lambda(\xi) = \int_{-1}^1 \ln(|T'|)d\mu_{\xi}.
\]
Suppose now:
\begin{enumerate}
\item[R1] there exists $\xi_0$ such that $\lambda(\xi)$ is well-defined and continuous in $[0,\xi_0)$,
\item[R2] there exist $\xi_1$ in $[0,\xi_0)$, $C>0, \theta<1$ such that  $||P^n_{\xi_1}|_{\mathcal{U}_0}||_{BV}\leq C\theta^n$,
      where $P_{\xi_1}$ is the annealed Perron-Frobenius operator associated to 
      the random dynamical system with noise size $\xi_1$, defined in Definition \ref{def:annealed} and $\mathcal{U}_0$
      is the subspace of BV functions with average $0$ defined in Definition \ref{def:averagezero},
\item[R3] $-\infty <\int_{-1}^1 \ln(|T'(x)|)dm/2 < 0$,
\item[R4] the noise kernel is a mother noise kernel (Definition \ref{def:mothernoise}).
\end{enumerate}
Then, the function $\lambda(\xi)$ is well defined for and continuous for $\xi\geq 0$ and the map $T$ exhibits Noise Induced Order.
\end{theorem}

The proof of the theorem is found in Section \ref{sec:suff}.

We will try to give an intuition behind the hypothesis for this Theorem and our proof.
Hypothesis D1 and D2 are telling us that the original system has positive Lyapunov exponent; 
the deterministic system is chaotic, D3 is a mild regularity assumption.
Hypothesis R1 follows from the stochastic stability and tells us that Lyapunov exponent is continuous in a small neighborhood of $0$. 
Hypothesis R2 kickstarts our continuity argument; remark that even if the underlying dynamical 
system has subexponential decay of correlations, this hypothesis may be satisfied, due to the 
smoothing properties of noise.
Hypothesis R3 is an hypothesis on the behavior of the system when the noise is big; as the noise 
size increases the noise moves the random orbits uniformly inside the system, so 
the Lyapunov exponent along random orbits is negative.
Hypothesis R4, the fact that the noise density is a mother noise kernel can be intuitively
understood as the fact that, as the noise amplitude increases, the support of the noise 
contains the support of smaller amplitude noises.

The consequences for the family $T_{\alpha, \beta}$ are summarized in the following theorem
and are proved in section \ref{sec:consequences}
\begin{theorem}\label{thm:consequences}
Let $T:[-1,1]\to [-1,1]$ be of the form
\[
T_{\alpha,\beta}(x)=1-2\beta |x|^{\alpha}.
\]
For all $\alpha>\tilde{\alpha}>2.67835$ there exists an $\epsilon(\alpha)$ such that for all 
$\beta \in (1-\epsilon(\alpha), 1]$ the map $T_{\alpha, \beta}$ exhibits
Noise Induced Order.
\end{theorem}

\subsection{Structure of the paper}
We start in Section \ref{sec:annealed} where we introduce the annealed transfer operator,
prove that contraction of the space of average $0$ function for a noise amplitude implies
contraction for all bigger noise amplitudes and prove that the Birkhoff averages
of $L^1(m)$ observables are continuous with respect to noise size and the underlying dynamics 
 once contraction of average $0$ functions is established.
Section \ref{sec:suff} is a small section, where we present the proof of Theorem \ref{thm:result}.
Section \ref{sec:consequences} studies the family $T_{\alpha,\beta}$, by producing results
on its stochastic parameters, stochastic stability and showing the conditions on
$\alpha$ and $\beta$ that imply that maps in the family
present noise induced order.

\section{The annealed transfer operator}\label{sec:annealed}

\subsection{Generalities on the involved functional spaces}
\begin{definition} 
Let $[a,b]\subset \mathbb{R}$ be an interval endowed with the Lebesgue measure $m$;
we denote by $L^r([a,b])$ the Banach space of real valued functions such that
\[
||f||_{L^r([a,b])}:=\sqrt[r]{\int_a^b |f|^r dm}<+\infty.
\]
We will drop the interval from the notation when clear from the context.
Of particular interest for us is $L^1([-1,1])$.
     
We define $L^{\infty}([a,b])$ to be the Banach space of real valued functions such that
\[
||f||_{L^\infty([a,b])}:=\esssup_{x\in [a,b]}|f(x)|<+\infty, 
\]
where the essential supremum is the smallest real number $a$ such that $|f(x)|\leq a$
for $m$-almost every $x$ in $[a,b]$.
\end{definition}
\begin{definition}\label{def:density}
We will call a \textbf{density} a nonnegative function $f\in L^1([a,b])$ such that
\[
\int_a^b f dm = 1 
\]
\end{definition}
\begin{lemma}
If $f$ is a density, then 
\[
||f||_{L^1([a,b])} = 1.     
\]
\end{lemma}
\begin{proof}
This follows from the definition, since $f$ is nonnegative
\[
\int_a^b |f| dm = \int_a^b f dm = 1.
\]
\end{proof}
\begin{definition}
Let $\phi:[a,b]\to \mathbb{R}$ be a real valued function on $[a,b]$, we define the \textbf{variation} of $\phi$ on $[a,b]$
as
\[
\Var_{[a,b]}(\phi) = \sup_{\mathcal{P}}\sum_i |\phi(x_{i+1})-\phi(x_i)|     
\]
where $\mathcal{P}$ is any partition of $[a,b]$ with endpoints $x_i$.
If the variation of $\phi$ is finite, we say that $\phi$ is a function of bounded variation on 
$[a,b]$.
The functions of bounded variation on $[a,b]$ are a Banach space when equipped with the norm
\[
||\phi||_{BV([a,b])}:=||\phi||_{L^1([a,b])}+\Var_{[a,b]}(\phi).     
\]
When the domain $[a,b]$ is clear, we will drop the subscript.
\end{definition}
     
In the following lemma, we need to pay some attention on the domain of definitions of 
the functions and to the support of measures:
the convolution is defined in general for functions defined on the real line, while 
we speak of functions which are $L^1$ or bounded variation on intervals.
     
\begin{definition}
Denote by $\chi_X$ is the characteristic function of the set $X$.
     
In the following we will denote by 
\[
\hat{\phi} = \phi\cdot \chi_{[-\xi, \xi]}    
\]
and by    
\[
\hat{f} = f \cdot \chi_{[-1,1]}
\]
the functions that extend by $0$ outside their 
intervals of definition the functions $\phi$ and $f$ respectively.
     
Given a probability measure $\mu$ on $[-1, 1]$ we define its extension 
$\hat{\mu}$ on $\mathbb{R}$ as the unique measure $\hat{\mu}$ on $\mathbb{R}$
such that 
\[
\hat{\mu}(A) = \mu(A\cap [-1, 1])     
\]    
for all $A$ measurable in $\mathbb{R}$.
\end{definition}
\begin{lemma}\label{lem:extension}
The following are true:
\begin{enumerate}
     \item \label{it:extvar}$\Var_{[a,b]}(\hat{\phi}) \leq \Var_{[-\xi, \xi]}(\phi)+2 \sup_{[-\xi, \xi]}|\phi(x)|\leq 3 ||\phi||_{BV([-\xi,\xi])}$ for all interval $[a,b]$ that contains $[-\xi, \xi]$,
     \item \label{it:L1} $||\hat{f}||_{L^1([c,d])} = ||f||_{L^1([-1,1])}$ for all interval $[c,d]$ that contains $[-1, 1]$, 
     \item \label{it:prob} $\hat{\mu}([-1-\xi, 1+\xi])=1$ for all $\xi$.
\end{enumerate}
\end{lemma}
\begin{proof}
Both follow from the respective definitions.
We first prove item \ref{it:extvar}; let $\{x_1=a,\ldots,x_n=b\}$ be a partition of $[a,b]$,
without loss of generality we can suppose that $x_l=-\xi$, $x_{l+k}=\xi$.
     
Then 
\begin{align*}
&\sum_{i=1}^n |\hat{\phi}(x_{i+1})-\hat{\phi}(x_i)| = |\hat{\phi}(x_{l})|+\sum_{i=l}^{l+k-1} |\hat{\phi}(x_{i+1})-\hat{\phi}(x_i)|+|\hat{\phi}(x_{l+k})|\\
&= |\phi(x_{l})|+\sum_{i=l}^{l+k-1} |\phi(x_{i+1})-\phi(x_i)|+|\phi(x_{l+k})|\leq \Var_{[-\xi, \xi]}(\phi)+2 \sup_{[-\xi, \xi]}|\phi(x)|.
\end{align*}
     
We prove now item \ref{it:L1}:
\begin{align*}
\int_c^d |\hat{f}| dm = \int_c^d |f\cdot \chi_{[-1, 1]}| = \int_{-1}^1 |f|dm = ||f||_{L^1([-1,1])}. 
\end{align*}
     
Finally item \ref{it:prob}
\[
\hat{\mu}([-1-\xi, 1+\xi]) = \mu([-1-\xi, 1+\xi]\cap [-1, 1]) = \mu([-1,1]) = 1.
\]
\end{proof}

\subsection{Regularization properties of convolution on measures}
In this subsection we define what is the convolution of a measure with 
respect to a bounded variation function and prove some regularization properties of this operator;
the most important is that convolution of a measure with a bounded variation functions is 
a measure which is absolutely continuous with respect to Lebesgue.

\begin{definition}\label{def:convolutionmeas}
Let $\mu$ be any probability measure in $[-1,1]$, and let $\rho$ be a bounded variation 
function on $[-\xi, \xi]$ with $\int_{-\xi}^{\xi} \rho=1$; their convolution is the unique probability
measure $\hat{\rho}\ast \hat{\mu}$ on $\mathbb{R}$ such
that 
\[
\hat{\rho}\ast \hat{\mu}(A) = \int_{[-\xi, \xi]}\hat{\rho}(y)\hat{\mu}(A-y)dm(y), 
\]
where $A-y$ to denote the set $\{x-y \mid x\in A\}$.
\end{definition}

\begin{lemma}\label{lemma:propconvmeas}
The following properties of $\hat{\rho}\ast \hat{\mu}$ are true:
\begin{enumerate}
\item \label{it:prob2} $\hat{\rho}\ast \hat{\mu}([-1-\xi, 1+\xi]) = 1$,
\item \label{it:delta} if $\mu=\delta_p$, the Dirac-$\delta$ measure at $p\in [-1,1]$ 
we have that 
\[
\hat{\rho}\ast \hat{\delta_p} = \hat{\rho}(x-p)\cdot m(x);
\]
in particular, $\hat{\rho}\ast \hat{\delta_p}$ is absolutely continuous with respect to Lebesgue,
\item \label{it:acconv} if $\mu = f dm$ then $\hat{\rho}\ast \hat{\mu}$ has density $\hat{\rho}\ast \hat{f}$.
\end{enumerate}
\end{lemma}
\begin{proof}
Item \ref{it:prob2} follows from the definition and Item \ref{it:prob} in Lemma \ref{lem:extension}: 
\begin{align*}
\hat{\rho}\ast \hat{\mu}([-1, 1]) &= \int_{[-\xi,\xi]}\hat{\rho}(y)\hat{\mu}([-1-\xi, 1+\xi])dm(y)\\
&= \mu([-1, 1])\int_{[-\xi,\xi]}\hat{\rho}(y)dm(y) = 1.
\end{align*}
          
Item \ref{it:delta} follows from the definition, recalling that $\delta_p(A)=\chi_A(p)$: 
\begin{align*}
\hat{\rho}\ast \hat{\delta_p}(A) &= \int_{[-\xi, \xi]}\hat{\rho}(y)\hat{\delta_p}(A-y)dm(y)=\int_{[-\xi, \xi]}\hat{\rho}(y)\chi_{A-y}(p)\chi_{[-1,1]}(p)dm(y)\\
&=\int_{[-1-\xi, 1+\xi]}\chi_{[-\xi, \xi]}(y)\rho(y)\chi_{A}(p+y)dm(y)=\\
&=\int_{[-1-\xi, 1+\xi]}\chi_{[-\xi, \xi]}(z-p)\rho(z-p)\chi_{A}(z)dm(x)=\int_A \hat{\rho}(z-p)dm(z),
\end{align*}
where in the last line we used the change of variables $z=p+y$.
          
Item \ref{it:acconv} follows from the definition.
\end{proof}
     
We prove now a general result on sequences of absolutely continuous probability measures 
with uniformly bounded densities.
\begin{lemma}\label{lemma:seqboundeddensities}
Let $\mu_n = f_n dm$, $n\in \mathbb{N}$ be a sequence of absolutely continuous probability measures such that:
\begin{itemize}
     \item $f_n\in L^{\infty}(m)$ for all $n$
     \item there exists an interval $[a,b]$ such that $\mu_n(\mathbb{R}\setminus [a,b])=0$, for all $n$,
     \item $\mu_n$ converges weakly to $\mu$,
     \item there exists $M>0$ such that $||f_n||_{\infty}\leq M$ for all $x\in [a,b], n \in \mathbb{N}$.
\end{itemize}
then $\mu$ is an absolutely continuous probability measure, with $\mu(\mathbb{R}\setminus [a,b])=0$.
\end{lemma}
\begin{proof}
     
By Portmanteau theorem, weak convergence of $\mu_n$ to $\mu$ implies that 
for all open sets $A$
\[
\mu(A)\leq \liminf \mu_n(A).      
\]
This implies that 
\[
\mu(\mathbb{R}\setminus [a,b])\leq \liminf \mu_n(\mathbb{R}\setminus [a,b])=0.
\]
     
The fact that $\mu$ is a probability measure follows from definition of weak convergence.
     
Suppose now $B$ is a measurable set; we claim that if $m$
is the Lebesgue measure $m(B)=0$ implies $\mu(B)=0$.
     
Let $B$ measurable, without loss of generality we can suppose $B\subseteq [a,b]$,  
and let $A$ be any open set containing $B$; by weak convergence, Portmanteau theorem
and the fact that $||f_n||_{\infty}\leq M$ for all $n$ we have that:
\begin{align*}
\mu(B)\leq \mu(A)\leq \liminf \mu_n(A) = \liminf \int_A f_n dm \leq M \cdot m(A).
\end{align*}
     
We recall that the Lebesgue measure $m$ on $\mathbb{R}$ is outer regular, i.e., 
for all measurable sets $B$ we have that $m(B) = \inf \{ m(A) \mid \textrm{$A$ open}, B\subseteq A\}$;
taking the $\inf$ over all open sets $A$ containing $B$ on the right side of the inequality above 
implies that $\mu(B)\leq M\cdot m(B)$ and absolute continuity of $\mu$.
\end{proof}
\begin{remark}
The hypothesis that the $f_n$ are uniformly bounded is fundamental in the proof above,
and the theorem is false if it is not satisfied. An example is the sequence $f_n = 1/(2\xi) \chi_{[-\xi, \xi]}\cdot m$
which converges weakly to $\delta_0$.
\end{remark}

\begin{remark}
This Lemma is a folklore result in measure theory 
\cite{SE}; in the provided link, different proofs and a discussion of the result are provided. 

We sketch another proof, found at the provided link, with a more functional analytic approach:
by classical results $C^{\infty}([a,b])$ is dense in $L^2([a,b])$.
We define a sequence of functionals $T_n(g): C^{\infty}([a,b])\to \mathbb{R}$
by $T_n(g) = \int g d\mu_n$; by weak convergence, for each $g\in C^{\infty}([a,b])$ we can define 
$T(g):= \lim_{n\to +\infty} \int g d\mu_n = \int g d\mu$.
 
We show now that, since the $f_n$ are uniformly bounded $T$ can be extended to a functional $T:L^2([a,b])\to \mathbb{R}$;
this follows from the Cauchy-Schwarz inequality, since 
\[
|T_n(g)| = |\int g f_n dm| \leq ||g||_{L^2} ||f_n||_{L^2}\leq ||g||_{L^2} \sqrt{b-a}\cdot  M,     
\]
since the bound is uniform in $n$ the functional $T(g)$ can be extended to a bounded linear functional on $L^2$.
By Riesz representation theorem, there exists an $f\in L^2([a, b])$ such that 
\[
T(g)=\int g \cdot f dm,      
\]
which implies that $\mu = f \cdot m$ is absolutely continuos.
\end{remark}

\begin{lemma}\label{lemma:weakconv}
Let $\mu_n$ be a sequence of probability measures on $[-1,1]$ weakly converging to $\mu$.
Then $\hat{\rho}\ast \hat{\mu_n}$ converges weakly to $\hat{\rho}\ast \hat{\mu}$.
\end{lemma}
\begin{proof}
By definition of weak convergence we have that for all $\phi$ Lipschitz
on $[-1,1]$ we have that 
\[
\lim_{n\to+\infty} \int_{-1}^1 \phi d\mu_n = \int_{-1}^1 \phi d\mu.   
\]

Let $\phi$ be a Lipschitz continuous function on $[-1-\xi, 1+\xi]$, 
then, since $\int_{-\xi}^{\xi}\rho dm =1$ we have that 
\begin{align*}
&\left|\int \hat{\rho}(x+h-y)\phi(y)dm(y)-\int \hat{\rho}(x-y)\phi(y)dm(y)\right|\\
&=\left|\int \hat{\rho}(z)\left(\phi(x+h-z)-\phi(x-z)\right)dm(z)\right|\leq L \cdot h     
\end{align*}
where $L$ is the Lipschitz constant of $\phi$.

Now, for each $\phi$ Lipschitz on $[-1-\xi,1+\xi]$ we have
\[
\int \phi(x) \int \hat{\rho}(x-y) d\hat{\mu}_n (y) dm(x) = \int \int \phi(x) \hat{\rho}(x-y) dm(x)d\hat{\mu}_n(y),     
\] 
by the inequality above $\phi\ast \hat{\rho}$ is Lipschitz continuous and so 
is its restriction to $[-1, 1]$; therefore, for each $\phi$ Lipschitz on $[-1-\xi, 1+\xi]$
we have
\begin{align*}
&\lim_{n\to+\infty}\int \phi(x) d (\hat{\rho}\ast \hat{\mu}_n)(x) = \lim_{n\to+\infty}\int_{-1}^1 (\hat{\rho}\ast \phi)(x) d\mu_n(x)\\
&= \int_{-1}^1 (\hat{\rho}\ast \phi)(x) d\mu(x) = \int_{-1-\xi}^{1+\xi} \phi(x) d (\hat{\rho}\ast \hat{\mu})(x). 
\end{align*}
\end{proof}

We prove now the final result of this section, that shows
that convolution with a bounded variation kernel maps probability measures into
probability measures which are absolutely  continuous with respect to Lebesgue.

\begin{lemma}\label{lemma:measuretoac}
Let $\mu$ be a probability measure in $[-1,1]$, then $\hat{\rho}\ast \hat{\mu}$
is a probability measure on $[-1-\xi, 1+\xi]$, absolutely continuous with respect to Lebesgue. 
\end{lemma}
\begin{proof}
Recall from Definition \ref{def:convolutionmeas} that $\rho$ is a bounded variation 
function on $[-\xi, \xi]$ with $\int_{-\xi}^{\xi} \rho dm=1$.

The proof follows from Lemma \ref{lemma:seqboundeddensities};
let $\{ -1-\xi = x_0, \ldots x_{n+1} = 1+\xi \}$ be a partition of $[-1-\xi, 1+\xi]$ such that  
$x_{i+1}-x_i\leq 2/n$ for all $i = 0,\ldots, n$.
Let 
\[
\mu_n = \sum_{i=0}^n \mu([x_i, x_{i+1}]) \delta_{p_i}     
\]
where $p_i= (x_{i+1}+x_i)/2$ and $\delta_{p_i}$ is the Dirac-$\delta$ measure centered at $p_i$. Then $\mu_n$ converges weakly to $\mu$,
and $\hat{\rho}\ast \hat{\mu}_n$ converges weakly to $\hat{\rho}\ast \hat{\mu}$
by Lemma \ref{lemma:weakconv}.
          
By Lemma \ref{lemma:propconvmeas} Item \ref{it:delta} and linearity of convolution we have that 
\[
\hat{\rho}\ast \hat{\mu}_n = \sum_{i=0}^n \mu([x_i, x_{i+1}])\hat{\rho}(x-p_i) \cdot m, 
\]
which, for each $n$, is an absolutely continuous probability measure whose density is uniformly bounded,
i.e.,  
\[
\left| \sum_{i=0}^n \mu([x_i, x_{i+1}])\hat{\rho}(x-p_i) \right|\leq ||\rho||_{L^{\infty}([-\xi, \xi])}\sum_{i=0}^n \mu([x_i, x_{i+1}])\leq ||\rho||_{BV}.
\]
Then, by Lemma \ref{lemma:seqboundeddensities} we have that $\hat{\rho}\ast \hat{\mu}$ is an absolutely continuous
probability measure.

\end{proof}

\subsection{Regularization properties of convolution on densities}

\begin{lemma}\label{lem:reg}
Let $f\in L^1([-1, 1])$ and let $\phi$ be a bounded variation function on $[-\xi, \xi]$;
then, their convolution
\[
\hat{\phi}\ast \hat{f} (x):=\int_{-\infty}^{\infty} \hat{\phi}(x-y)\hat{f}(y)dy
\]
is a bounded variation function with support in $[-1-\xi, 1+\xi]$, such that
\[
\Var_{[-1-\xi, 1+\xi]}(\hat{\phi}\ast \hat{f})\leq \left(\Var_{[-\xi, \xi]}(\phi)+2 \sup_{[-\xi, \xi]}|\phi(x)|\right)||f||_{L^1([-1,1])}.    
\]

Morever, if $\phi(x)\geq 0$ and $\int_{[-\xi, \xi]} \phi(x)dm(x)=1$, then $||\hat{\phi}\ast \hat{f}||_{L^1([-1-\xi, 1+\xi])}\leq ||f||_{L^1([-1,1])}$.
\end{lemma}
\begin{proof}
Let $\tau_x$ be the translation operator on functions, i.e.,
$(\tau_y \hat{\phi})(x)=\hat{\phi}(x-y)$.
By definition
\[
\Var_{[-\xi,\xi]}(\hat{\phi}) = \Var_{[y-\xi,y+\xi]}(\tau_y\hat{\phi}).    
\]

We first remark that by definition of $\hat{\phi}$ and $\hat{f}$, their convolution
$\hat{\phi}\ast\hat{f}$ is $0$ outside $[-1-\xi, 1+\xi]$.

We observe now that for any partition $\mathcal{P}$ of $[-1-\xi, 1+\xi]$ we have that
\begin{align*}
\int_{-1}^1 \sum_i |\hat{\phi}(x_i-y)-\hat{\phi}(x_{i+1}-y)||\hat{f}(y)|dy \leq \int \Var_{[y-\xi, y+\xi]}(\tau_y \hat{\phi})|\hat{f}(y)|dy; 
\end{align*}
observing that $\tau_y \hat{\phi}$ is $0$ outside of $[y-\xi, y+\xi]$,
and that 
\[
\sum_i |\hat{\phi}(x_i-y)-\hat{\phi}(x_{i+1}-y)|\leq \Var_{[y-\xi, y+\xi]}(\tau_y \hat{\phi})\leq \Var_{[\xi, \xi]}(\phi)+2 \sup_{[-\xi, \xi]}|\phi(x)|,
\]
by definition of variation.

Therefore
\begin{equation}\label{eq:boundvar}
\int \sum_i |\hat{\phi}(x_i-y)-\hat{\phi}(x_{i+1}-y)||\hat{f}(y)|dy \leq \Var_{[-\xi, \xi]}(\hat{\phi})||f||_{L^1([-1,1])}
\end{equation}
remark that the fact that the left handside above is bounded will allow us to interchange the summation
and integral sign by Fubini-Tonelli theorem, and that on the right hand side we have the variation of $\phi$ and
the $L^1$ norm of $f$, by Lemma \ref{lem:extension}.

For any partition $\mathcal{P}$ of $[-1-\xi, 1+\xi]$ we have that
\[
\sum_i |\int \hat{\phi}(x_i-y)-\hat{\phi}(x_{i+1}-y)\hat{f}(y)|dy\leq \sum_i \int |\hat{\phi}(x_i-y)-\hat{\phi}(x_{i+1}-y)||\hat{f}(y)|dy 
\] 
exchanging the summation and integral sign and using \eqref{eq:boundvar} we obtain the thesis.

Suppose now $\int_{[-\xi, \xi]} \phi(x)dm(x)=1$, by the argument above we know 
that the convolution integral is bounded so we can exchange the order of integration;
remembering that $\hat{f}$ extends $f$ by $0$ outside $[-1,1]$ we have then:
\begin{align*}
\int_{-1-\xi}^{1+\xi}&\left|\int_{-\xi}^{\xi}\hat{\phi}(y)\hat{f}(x-y) dm(y) \right| dm(x)\leq \int_{-\xi}^{\xi} \hat{\phi}(y) \int_{-1-\xi}^{1+\xi} |\hat{f}(x-y)| dm(x) dm(y)\\
&= \int_{-\xi}^{\xi} \hat{\phi}(y) ||f||_{L^1([-1,1])} dm(y) = ||f||_{L^1([-1,1])}.         
\end{align*}
\end{proof}

\begin{remark}\label{rem:equivBV}
A useful characterization of bounded variation functions is the following 
approximation by smooth functions result, \cite[Theorem 3.9]{AmFuPa}.
A function $u\in L^1([a,b])$ is of bounded variation
if and only if there exists a sequence $u_n$ in $C^{\infty}([a,b])$ converging to $u$
in $L^1([a,b])$ and such that 
\[
\lim_{n\to+\infty}\int_{a}^b |u'_n|dm \leq V < +\infty      
\]
The smallest possible constant $V$ is the variation of $u$.
All of the proofs about regularity in our paper can be redone by using this characterization.
\end{remark}

\subsection{Definition of the annealed transfer operator}
\begin{definition}\label{def:transferoperator}
Let $T:[-1, 1]\to [-1,1]$ be a measurable map. The map $T$ induces an operator 
on $L:\mathcal{SM}([-1,1])\to \mathcal{SM}([-1,1])$ where $\mathcal{SM}([-1,1])$
is the space of signed measures on $[-1,1]$, defined in the following way:
if $\mu\in \mathcal{SM}([-1,1])$ then 
\[
     L\mu(A) = \mu(T^{-1}A)
\]
for all measurable sets $A$.
This operator is called the pushforward operator associated to $T$ or the \textbf{transfer operator}
associated to $T$.

The space of Lebesgue absolutely continuous measures is a vector subspace of $\mathcal{SM}([-1,1])$;
if $T$ is non-singular with respect to Lebesgue then $L$ preserves this subspace of absolutely continuous measures
and induces an operator from $L^1([-1,1])$ into itself called the 
\textbf{Perron-Frobenius} operator.
We will denote by $P$ the Perron-Frobenius operator.
\end{definition}
\begin{remark}
Given an absolutely continuous probability measure $\mu = f\cdot m$, with density $f$,
$Pf$ is the Radon-Nikodym derivative of $L \mu$ with respect to $m$ \cite{Sarig}.
\end{remark}

\begin{remark}
By definition, for any measurable function $\phi$, the pushforward operator satisfies the following duality 
formula
\[
\int_{-1}^1 \phi d(L\mu) = \int_{-1}^1 \phi\circ T d\mu.     
\]
\end{remark}
The following is a collection of basic properties of the Perron-Frobenius operator 
$P$, that are proved in the first pages of \cite{Sarig}, whose proof we omit.
\begin{lemma}[\cite{Sarig}]
The following statements are true.
\begin{enumerate}
     \item $Pf$ is the unique function in $L^1([-1,1])$ such that for all test function in $L^{\infty}(m)$:
     \[\int_{-1}^1 \phi \cdot Pf dm = \int_{-1}^1 \phi\circ T\cdot  f dm,\]
     \item $P$ is a positive linear operator, and $||P||_{L^1([-1,1])}=1$, 
     \item if $f$ is a density, then $Pf$ is a density.
\end{enumerate}
\end{lemma}

\begin{definition}
We will call \textbf{boundary condition} one of the two following maps:
\begin{itemize}
\item $\pi_P(x) = x$ mod $2$, called a \textbf{periodic boundary conditions},
\item $\pi_R(x) = (\min_{i\in \mathbb{Z}} |(x+1)-4i|)-1$, called a \textbf{reflecting boundary conditions}.
\end{itemize}
When the choice of the boundary condition is unimportant we will denote 
a boundary condition by $\pi$.
We will denote by $\pi_*$ the push-forward map acting on measures by 
\[
(\pi_*\mu)(A) = \mu(\pi^{-1}(A)).
\]
\end{definition}
\begin{remark}
In the definition of $\pi_P$ above we choose as representatives of the equivalence relation 
classes the points in $(-1,1]$.
\end{remark}
\begin{remark}\label{rem:abuse}
By abuse of notation $\pi_*$ will denote also the map that 
$\pi_*$  it induces on densities, i.e,
if $\mu$ has density $g$, then $\pi_*(g)$ is the density of 
$\pi_*\mu$; refer to Lemma \ref{lem:proj} for the conditions under
which this map is well defined and their proof.
\end{remark}
\begin{remark}
The map $\pi_*$ is well defined only on measures $\mu$ on $\mathbb{R}$
such that there exists an interval $[a,b]$ such that 
\[
\mu(\mathbb{R}\setminus [a,b]) = 0;     
\] 
by Lemma \ref{lemma:measuretoac} this is true for all the measures 
$\hat{\rho}\ast \hat{\mu}$ in our treatment.
\end{remark}

\begin{remark}\label{rem:extbounded}
Let $\pi^*$ be the map that associates to any  
$\phi$ bounded and measurable on $[-1,1]$ its extension 
$\hat{\phi}$ such that 
\[
\hat{\phi}(x) = \phi(\pi(x)),
\]
for a boundary condition $\pi$.

If $\mu$ is a measure on $\mathbb{R}$
such that there exists an interval $[a,b]$ such that 
\[
\mu(\mathbb{R}\setminus [a,b]) = 0;     
\] 
we have that 
\[
\int \phi d \pi_* \mu = \int \pi^*(\phi) d\mu.    
\]
\end{remark}

\begin{lemma}\label{lem:proj}
Let $\mu=f \cdot m$ be an absolutely continuous probability measure on $\mathbb{R}$, 
with density $f$ such that $f\equiv 0$ in $\mathbb{R}\setminus [a,b]$.
For any boundary condition $\pi$, $\pi_*(\mu)$ is an absolutely continuous 
probability measure on $[-1,1]$.

Moreover if $f$ is of bounded variation, then $\pi_* \mu$ has a bounded variation density.
\end{lemma}
\begin{proof}
Let $\pi_i$ be the restriction of $\pi$ to the interval $I_i = [-1+2i, 1+2i]$; by definition,
$\pi_i$ is one to one and affine.
Let $g$ be the density of $\mu$ and $g_i$ its restriction to $I_i$, then $\pi_* \mu$
has density $\tilde{g}:=\sum_i g_i(\pi_i^{-1}(x))$, where this sum is well defined since $g$ has bounded support.
Then
\[
||\tilde{g}||_{L^1([-1,1])}\leq \sum_{i} ||g_i||_{L^1(I_i)}=||g||_{L^1([a,b])}.
\]

If $g$ is of bounded variation, then:
\[
\Var_{[-1,1]}(\tilde{g})\leq \sum_{i} \Var_{I_i}(g_i) \leq \Var_{[a,b]}(g).     
\]
\end{proof}

\begin{definition}\label{def:mothernoise}
Let $\rho$ a bounded variation function such that
$\rho(x)\geq c> 0$ for all $x\in [-1,1]$, $\rho(x)=0$ outside $[-1,1]$ and 
$\int_{-1}^1 \rho(x)dm = 1$; we will call such a function
a \textbf{mother noise kernel}.

In the following, define
\[
\rho_{\xi}(x) := \frac{1}{\xi}\rho\left(\frac{x}{\xi}\right).    
\]
We will call $\xi$ the \textbf{amplitude of the noise}.
\end{definition}    
\begin{definition}
Let $T: [-1, 1]\to [-1,1]$ be a measurable non-singular function; a random dynamical 
system with noise amplitude $\xi$ with initial condition $x_0$ is 
a sequence of random variables  
\[
X_0 = x_0, \quad X_{n+1} = \pi(T(X_n)+\Omega_{\xi})     
\]
where $\Omega_{\xi}$ is a random variable with probability density $\rho_{\xi}$ and 
$\pi$ is either a periodic or reflecting boundary condition.
\end{definition}

\begin{definition}\label{def:annealed}
The annealed transfer operator $L_{\xi}$ associated to the system with noise is defined by 
\[
L_{\xi}\mu = \pi_* (\hat{\rho}_{\xi}\ast \widehat{L\mu})
\] 
where $\pi_*$ can be either periodic or reflecting boundary conditions.
\end{definition}
\begin{lemma}
The operator $L_{\xi}$ induces an operator $P_{\xi}$ acting on densities 
such that
\[
P_{\xi}f = \pi_*(\hat{\rho}_{\xi}\ast \widehat{Pf}).     
\]      
\end{lemma}
\begin{proof}
Let $\mu = f\cdot m$ be an absolutely continuous probability measure with 
density $f$.

By Definition \ref{def:transferoperator} we have that 
\[
L\mu = Pf \cdot m.     
\] 

By Lemma \ref{lemma:measuretoac}, we have that 
\[
\hat{\rho_{\xi}}\ast \widehat{L\mu} = (\hat{\rho_{\xi}}\ast \widehat{Pf})\cdot m,
\]
where the Lebesgue measure on the right handside is defined on $\mathbb{R}$.
Remark that by \ref{lemma:measuretoac} the support of $\hat{\rho}\ast \widehat{Pf}$
is contained in $[-1-\xi, 1+\xi]$.

Referring to Remark \ref{rem:abuse}, we have that 
\[
\pi_{*}(\hat{\rho_{\xi}}\ast \widehat{L\mu})=\pi_*(\hat{\rho_{\xi}}\ast \widehat{Pf})\cdot m
\]
where on the right handside $m$ is defined on $[-1,1]$.

Remark that by Lemmas \ref{lem:reg} and \ref{lem:proj},
and the fact that $P$ sends densities in densities, we have that $P_{\xi}$
is a well defined operator on densities.
\end{proof}

\begin{remark}
It is worth remarking that
$L_{\xi}\delta_y = \pi_*(\hat{\rho}_{\xi}(x-T(y)) \cdot m(x))$,
which, by Lemma \ref{lemma:propconvmeas}, Item \ref{it:delta} is 
absolutely continuous with respect to Lebesgue, with bounded variation density.
\end{remark}

\begin{definition}
Let $\mu_{\xi}$ be a fixed point for $L_{\xi}$, i.e.,
\[
L_{\xi}\mu_{\xi} = \mu_{\xi}.     
\]
We will call $\mu_{\xi}$ a \textbf{stationary measure} for $\mu_{\xi}$.
\end{definition}
\begin{remark}
If $P_{\xi}$ is the Perron-Frobenius operator operating on densities, and $f_{\xi}$
is a fixed point of this operator
\[
P_{\xi}f_{\xi} = f_{\xi} 
\]
then $\mu_{\xi}=f_{\xi}\cdot m$, where $m$ is the Lebesgue measure is a stationary measure. 
\end{remark}
The following theorem is a consequence of Birkhoff ergodic theorem and the skew product 
view of random dynamical systems, we refer to \cite{ViLLE}, and allows us to connect 
the notion of stationary measure and the notion of random dynamical system.
\begin{theorem}[Birkhoff Ergodic Theorem]
Suppose $L_{\xi}$ has a unique stationary measure $\mu_{\xi}$, let $\phi\in L^1(\mu_{\xi})$.
Then, for $\mu_{\xi}$ almost every initial condition $x_0$ and with probability one 
\[
\lim_{n\to +\infty}\frac{1}{n}\sum_{i=0}^{n-1}\phi(X_i)=\int \phi d\mu_{\xi}.     
\] 
\end{theorem}
\begin{remark}
We state the ergodic theorem in this weaker form, requiring uniqueness of the stationary measure
to simplify the treatment and avoid to define the notion of ergodicity for stationary measures.
\end{remark}
\begin{proof}[Sketch of proof]
It is possible to associate to our random dynamical system with additive noise 
a skew product $F:\Omega\times [-1,1]\to \Omega\times [-1,1]$, where 
$\Omega = [-\xi, \xi]^{\mathbb{N}}$, $\sigma: \Omega\to \Omega$ is the 
shift map and, for $\omega\in \Omega$, $x\in [-1,1]$ the skew product is 
defined as  
\[
F(\omega, x) = (\sigma \omega, \pi(T(x)+y)),     
\]
where $y=(\omega)_0$ is the first entry of $\omega$, and $\pi$ is the boundary condition. 

Denote by $\nu$ the product measure induced by $\rho_{\xi}\cdot m$ on $\Omega$, 
following the proof of
\cite[Proposition 5.4]{ViLLE} verbatim, we can see that $\mu_{\xi}$ is stationary 
if and only if $\nu\times \mu_{\xi}$ is invariant for $F$; 

We show the ``if'' claim; let $\psi(\omega, x)$ be a measurable function 
on $\Omega\times [-1,1]$, and let $\phi(x) = \int \psi(\omega, x)d\nu(\omega)$;
then 
\begin{align}
\nonumber \int\int \psi(\omega, x)&d\nu(\omega)d\mu_{\xi}(x) = \int \phi(x) d\mu_{\xi}(x)\\ 
\nonumber &=\int \phi(x) dL_{\xi} \mu_{\xi}(x)=\int\int\phi(\pi(T(x)+y))\rho_{\xi}(y)dm(y) d\mu_{\xi}(x)\\
&=\int\int\int \psi(\omega, \pi(T(x)+y))d\nu(\omega) \rho_{\xi}(y)dm(y) d\mu_{\xi}(x),\label{eq:skew}
\end{align}
since the product measure $\nu$ is invariant for the shift and by definition 
\[
\nu = (\rho_{\xi}\cdot m) \otimes \nu,
\] 
we have that \eqref{eq:skew} is equal to
\begin{align*}
\int\int \psi(\sigma(\omega), \pi(T(x)+y)) d\nu(\omega) d\mu_{\xi}(x) = \int\int \psi dL_{F}(\nu\times \mu)
\end{align*}
We show the ``only if'' claim; let $\phi:[-1,1]\to \mathbb{R}$ be bounded and measurable,
define $\psi(\omega, x) = \phi(x)$, then, recalling Remark \ref{rem:extbounded}
we have
\begin{align*}
\int \phi(x)d(L_{\xi}\mu_{\xi})(x) &= \int\int \phi(\pi(T(x)+y))\hat{\rho}_{\xi}(y)dm(y)d\mu_{\xi}(x)\\
&=\int\int \psi(\omega, \pi(T(x)+y))\hat{\rho}_{\xi}(y)dm(y)d\mu_{\xi}(x)\\
&=\int\int \psi(\sigma\omega, \pi(T(x)+y))\hat{\rho}_{\xi}(y)dm(y)d\mu_{\xi}(x)\\
&=\int \psi(\omega, x) dL_F(\nu\times \mu_{\xi})\\
&=\int\int  \psi(\omega, x) d(\nu \times\mu_{\xi}) = \int \phi(x)d\mu_{\xi}.
\end{align*} 
By \cite[Theorem 5.13]{ViLLE} and unicity of $\mu_{\xi}$ we get that 
$\mu_{\xi}$ is an ergodic stationary measure (we refer to \cite{ViLLE}
Section 5.3 for a definition),
therefore $\nu\times \mu_{\xi}$ is an ergodic invariant measure for $F$ 
and the statement follows.
\end{proof}

\subsection{Regularization properties of the annealed transfer operator}
The ergodic theorem tells us that if we want to understand the statistical properties
of a random dynamical system, we need to prove uniqueness of its stationary measure and
study its properties.
Our plan is to show that under some assumptions the random dynamical system admits 
a unique stationary measure, with density of bounded variation.

\begin{corollary}\label{cor:regularization}
The operator $P_{\xi}$ is a bounded operator from $L^1$ to $BV$, such that
\[
\Var_{[-1,1]}(P_{\xi}f)\leq \left(\Var_{[-\xi, \xi]}(\rho_{\xi})+2 \sup_{[-\xi, \xi]}|\rho_{\xi}(x)|\right)||f||_{L^1([-1,1])},     
\]
which in turn implies that $\Var_{[-1,1]}(P_{\xi}f)\leq 3||\rho_{\xi}||_{BV([-\xi,\xi])}||f||_{L^1([-1,1])}$.
\end{corollary}
\begin{proof}
This follows from Lemma \ref{lem:reg} and the proof of Lemma \ref{lem:proj},
i.e.,
\begin{align*}
\Var_{[-1, 1]}(P_{\xi}f) &= \Var_{[-1, 1]}(\pi_* (\hat{\rho}_{\xi}\ast \widehat{Pf}))\\
&\leq \Var_{[-1-\xi, -1+\xi]}(\hat{\rho}_{\xi}\ast \widehat{Pf})\\
&\leq \left(\Var_{[-\xi, \xi]}(\rho_{\xi})+2 \sup_{[-\xi, \xi]}|\rho_{\xi}(x)|\right)||Pf||_{L^1([-1,1])} 
\end{align*}
and the fact that $||P||_{L^1 \to L^1}\leq 1$.
As in many other occasions, we use that $||\rho_{\xi}||_{BV([-\xi, \xi])}\geq \sup_{[-\xi, \xi]}|\rho_{\xi}(x)|$ to give the following bound
\[
\left(\Var_{[-\xi, \xi]}(\rho_{\xi})+2 \sup_{[-\xi, \xi]}|\rho_{\xi}(x)|\right)\leq 3||\rho_{\xi}||_{BV([-\xi, \xi])}.
\]
\end{proof}
\begin{remark}
In particular, if $f$ is a density (Definition \ref{def:density}), we have that 
\[
\Var_{[-1, 1]}(P_{\xi}f)\leq \Var_{[-\xi, \xi]}(\rho_{\xi})+2 \sup_{[-\xi, \xi]}|\rho_{\xi}(x)|.
\]
\end{remark}

\begin{corollary}[Big noise amplitude limit] \label{cor:bignoise}
Let $f_{\xi}$ be a density which is a fixed point of $P_{\xi}$; then 
\begin{align*}
\Var_{[-1,1]}(f_{\xi})&\leq \left(\Var_{[-\xi, \xi]}(\rho_{\xi})+2 \sup_{[-\xi, \xi]}|\rho_{\xi}(x)|\right)||f_{\xi}||_{L^1([-1,1])}\\
&= \left(\Var_{[-\xi, \xi]}(\rho_{\xi})+2 \sup_{[-\xi, \xi]}|\rho_{\xi}(x)|\right).
\end{align*}
Moreover this implies that 
\[
\lim_{\xi\to+\infty} \Var_{[-1,1]}(f_{\xi}) = 0,
\]
and therefore 
\[
\lim_{\xi\to +\infty}||f_{\xi}-\frac{1}{2}||_{BV([-1,1])}=0.
\]
\end{corollary}
\begin{proof}
This follows from Corollary \ref{cor:regularization}:
\[
\Var_{[-1,1]}(f_{\xi}) = \Var_{[-1,1]}(P_{\xi}f_{\xi})\leq \left(\Var_{[-\xi, \xi]}(\rho_{\xi})+2 \sup_{[-\xi, \xi]}|\rho_{\xi}(x)|\right).
\]
The second statement follows from
\begin{align*}
\Var_{[-1,1]}(f_{\xi})&\leq \left(\Var_{[-\xi, \xi]}(\rho_{\xi})+2 \sup_{[-\xi, \xi]}|\rho_{\xi}(x)|\right)\\
&= \frac{1}{\xi}\left(\Var_{[-1,1]}(\rho)+2 \sup_{[-1, 1]}|\rho(x)|\right),
\end{align*}
so 
\[
\lim_{\xi \to+\infty} \Var_{[-1,1]}(f_{\xi}) = 0,
\]
which implies the thesis.
\end{proof}
\begin{remark}
Remark that Corollary \ref{cor:regularization} and \ref{cor:bignoise} do not depend on our choice 
of boundary condition. 
\end{remark}
\begin{remark}
Corollary \ref{cor:bignoise} tells us that for any bounded variation noise kernel,
as the amplitude of the noise increases, the orbits of the random dynamical system 
distribute themselves uniformly in the interval $[-1,1]$.
\end{remark}
\begin{definition}\label{def:averagezero}
Let 
\[
\mathcal{U}_0 = \{f \in L^1([-1, 1]) \mid \int f dm = 0\}.
\]
We call $\mathcal{U}_0$ the vector subspace of average $0$ measures; by abuse of notation 
we denote by $\mathcal{U}_0$ also its intersection with $BV([-1,1])$. 
We say $P_{\xi}$ \textbf{contracts the space of average zero functions in $L^1$} 
if 
\[
||P^n_{\xi}|_{\mathcal{U}_0}||_{L^1\to L^1}\leq C\theta^n
\]
for constants $C>0$, $0<\theta<1$.
We say $P_{\xi}$ \textbf{contracts the space of average zero functions in $BV$} 
if 
\[
||P^n_{\xi}|_{\mathcal{U}_0}||_{BV\to BV}\leq \tilde{C}\tilde{\theta}^n
\]
for constants $\tilde{C}>0$, $0<\tilde{\theta}<1$.
\end{definition}
\begin{lemma}
The operator $P_{\xi}$ contracts the space of average zero functions in $L^1$ if and only if 
it contracts the space of average of average zero functions in $BV$.
\end{lemma}
\begin{proof}
By Lemma \ref{lem:reg}, we have that
\[
||P_{\xi}||_{L^1\to BV}\leq 3||\rho_{\xi}||_{BV([-\xi,\xi])}.     
\]
Suppose $P_{\xi}$ contracts the space of average $0$ functions in $BV$.
Let $f$ be an average $0$ function in $L^1$, then
\[
||P^n_{\xi}f||_{L^1}\leq ||P^n_{\xi}f||_{BV} \leq ||P^{n-1}_{\xi}|_{\mathcal{U}_0}||_{BV \to BV} 3||\rho_{\xi}||_{BV([-\xi,\xi])} ||f||_{L^1}, 
\]
which implies that 
\[
||P^n_{\xi}|_{\mathcal{U}_0}||_{L^1}\leq \frac{3\tilde{C}||\rho_{\xi}||_{BV([-\xi,\xi])}}{\tilde{\theta}}\tilde{\theta}^{n},
\]
i.e., $P_{\xi}$ contracts the space of average $0$ functions in $L^1$.
If $P_{\xi}$ contracts the space of average $0$ functions in $L^1$ we have that, if $f$
is an average $0$ function in $BV$
\[
||P^n_{\xi} f||_{BV}\leq 3||\rho_{\xi}||_{BV([-\xi,\xi])} ||P^{n-1}_{\xi}f||_{L^1\to L^1}\leq 3||\rho_{\xi}||_{BV([-\xi,\xi])}\cdot C\theta^{n-1}||f||_{L^1};
\]
since $||f||_{L^1}\leq ||f||_{BV}$ this implies that
\[
||P^n_{\xi}|_{\mathcal{U}_0}||_{BV}\leq \frac{3||\rho_{\xi}||_{BV([-\xi,\xi])}\cdot C}{\theta}\theta^{n},
\]
i.e., that $P_{\xi}$ contracts the space of average functions in $BV$.
\end{proof}

\begin{lemma}\label{lemma:uniqueness}
If $P_{\xi}$ contracts the space of average $0$ functions in $L^1$ (or equivalently in $BV$), then
$L_{\xi}$ has a unique stationary measure.
\end{lemma}
\begin{proof}
We prove by contradiction that the stationary measure is unique: let $\mu$ and $\nu$
be stationary measures.
Since $L_{\xi}\mu=\mu$ and $L_{\xi}\nu =\nu$ we have that $\mu$ and $\nu$ are absolutely continuous
with respect to Lebesgue, with densities $f$ and $g$ respectively.
Now, $P_{\xi}f = f$ and $P_{\xi}g=g$, and, since $P_{\xi}$ contracts the space of 
average $0$ measures, we have that for any $n$
\[
||f-g||_{L^1} = ||P^n_{\xi}(f-g)||_{L^1}\leq C \theta^n ||f-g||_{L^1}.
\]
Take $N$ such that $C \theta^N<1$, the inequality above then implies that $||f-g||_{L^1}=0$,
which in turn implies that $\mu=\nu$.
\end{proof}

We will now generalize of a result in \cite{GaMoNi}: 
if for some noise amplitude the operator contracts the space of average $0$ functions in $L^1$, 
then for all bigger amplitudes the annealed operators also contracts the space of average $0$ functions in $L^1$.
We start by an auxiliary Lemma and Corollary.

\begin{lemma}\label{lemma:equivalent}
Let $\rho$ be a mother noise kernel, $\rho_{\xi}$ its rescaling, $\mu$ be a probability measure on $[-1, 1]$,
$\hat{\mu}$ its extension to $\mathbb{R}$ by $\hat{\mu}(A)=\mu(A\cap [-1,1])$,
then, for any measurable subset $A$ and for each $\hat{\xi}>\xi$ we have that
\[
\pi_*(\hat{\rho}_{\hat{\xi}}\ast \hat{\mu})(A)\geq \frac{c}{||\rho||_{BV}}\frac{\xi}{\hat{\xi}}\pi_*(\hat{\rho}_{\xi}\ast \hat{\mu})(A).      
\]
\end{lemma}
\begin{proof}
We remember that $\rho_{\xi}(x) = \frac{1}{\xi} \rho(x/\xi)$ and that $\rho(x)\geq c>0$
for all $x\in [-1, 1]$ by Definition of \ref{def:mothernoise}.
Therefore 
\[
\rho_{\xi}(x)\geq \frac{c}{\xi}
\]
for all $x \in [-\xi, \xi]$.

We have that 
\[
\pi_*(\hat{\rho}_{\hat{\xi}}\ast \hat{\mu})(A) = \int_{[-\hat{\xi}, \hat{\xi}]}\hat{\rho}_{\hat{\xi}}(y)\hat{\mu}(A-y) dm(y); 
\]
by the observation above we have that
\[
\int_{[-\hat{\xi}, \hat{\xi}]}\hat{\rho}_{\hat{\xi}}(y)\hat{\mu}(A-y) dm(y)\geq \frac{c}{\hat{\xi}} \int_{[-\hat{\xi}, \hat{\xi}]}\hat{\mu}(A-y) dm(y).
\]

Now, since 
\[
||\rho_{\xi}||_{\infty}\leq ||\rho_{\xi}||_{BV}\leq \frac{||\rho||_{BV}}{\xi}
\]
we have that 
\[
\pi_*(\hat{\rho}_{\xi}\ast \hat{\mu})(A)\leq \frac{||\rho||_{BV}}{\xi}\int_{[-\xi, \xi]}\hat{\mu}(A-y) dm(y).
\]

Since $\hat{\xi}>\xi$ and $\hat{\mu}(A-y)$ is nonnegative for all $y$ we have
\begin{align*}
\int_{[-\hat{\xi}, \hat{\xi}]}\hat{\rho}_{\hat{\xi}}(y)\hat{\mu}(A-y) dm(y)&\geq \frac{c}{\hat{\xi}} \int_{[-\hat{\xi}, \hat{\xi}]}\hat{\mu}(A-y) dm(y)\geq\\
\frac{c}{\hat{\xi}} \int_{[-\xi, \xi]}\hat{\mu}(A-y) dm(y)&\geq \frac{c}{||\rho||_{BV}}\frac{\xi}{\hat{\xi}}\int_{[-\xi, \xi]}\hat{\rho}_{\xi}(y)\hat{\mu}(A-y) dm(y).
\end{align*}
and the thesis follows.
\end{proof}

\begin{corollary}\label{cor:dens_lower_bound}
Let $\nu$ be a probability measure in $[-1,1]$.
Then, letting $\tau = \frac{c}{||\rho||_{BV}}\frac{\xi}{\hat{\xi}}$, we have that
\[
L_{\hat{\xi}}\mu(A)\geq \tau L_{\xi}\mu(A),     
\]
for all measurable subset $A$.
\end{corollary}
\begin{proof}
Use Lemma \ref{lemma:equivalent} with $\mu = L\nu$.
\end{proof}

We can now prove that mixing for some noise amplitude implies mixing for all bigger noise amplitudes.

\begin{lemma}\label{lemma:miximpliesmix}
Suppose $P_{\xi}$ contracts the space of average $0$ functions in $L^1$ for $\xi>0$;
then $P_{\hat{\xi}}$ contracts the space of average $0$ functions for any $\hat{\xi}>\xi$.
\end{lemma}
\begin{proof}



By Lemma \ref{lemma:propconvmeas}, Item \ref{it:delta}, we know that $L_{\xi}\delta_x$ is 
an absolutely continuous probability measure and by Corollary \ref{cor:dens_lower_bound} we have that 
for any measurable subset $A$, and any $x\in [-1,1]$: 
\[
(L^m _{\hat{\xi}}\delta_x)(A)\geq \tau^m (L^m_{\xi}\delta_x)(A). 
\]

By hypothesis, $P_{\xi}$ is contracting the space of average $0$ functions in $L^1$,
so that, for any $x\in [-1,1]$, if $f_{\xi}$ is the density of the stationary measure and 
$g$ is the density of $L^m_{\xi}\delta_x$, we have that
\[
||f_{\xi}-P_{\xi}^{m-1}g||_{L^1} = ||P_{\xi}^{m-1}(f_{\xi}-g)||_{L^1} \leq 2C \theta^{m-1},     
\]
which in turn implies that for any measurable subset $A$
\[
|(L^m_{\xi}\delta_x)(A)-\mu_{\xi}(A)|\leq 2C \theta^{m-1}.
\]

Let $N$ such that $2C \theta^{N-1}<1$, and let
\[
\nu_N = \tau^N (1-2C \theta^{N-1})\mu_{\xi},
\]
then
\[
(L^N _{\hat{\xi}}\delta_x)(A)\geq \nu_N(A)
\]
for all $x$ and for all measurable $A$.

Remember that, if $\mu$ and $\nu$ are absolutely continuous measures 
with respect to Lebesgue, with densities $f$ and $g$ respectively, we have that 
the total variation norm for measures (we refer to \cite{MeyTwee} for its definition) is related by 
the $L^1$ norm  by the following equation:
\[
||\mu-\nu||_{TV} = \frac{1}{2}||f-g||_{L^1}.     
\]
Then, by \cite[Theorem 16.2.4]{MeyTwee} and the fact that $L_{\xi}$ maps measure into 
absolutely continuous measures, i.e., item 2 in Lemma \ref{lemma:measuretoac}, we have that 
\[
||P^n_{\hat{\xi}}|_{\mathcal{U}_0}||_{L^1}\leq 4\rho^{\lfloor n/N \rfloor}; 
\]
where $\rho = 1-\tau^N (1-2C \theta^{N-1})$.
\end{proof}

\begin{lemma}\label{lemma:biggerthan1}
There exists $C>0$, $0<\theta<1$ such that for all $\xi\geq 1$
\[
     ||P^n_{\xi}|_{\mathcal{U}_0}||_{L^1}\leq C\theta^n.
\]
\end{lemma}
\begin{proof}
Recall that $\rho_{\xi}(x) =\frac{1}{\xi}\rho(x/\xi)$. Therefore,
for $\xi$ in $[k, k+1)$, where $k$ is a positive natural number, we have that 
\[
\rho_{\xi}(x)\geq \frac{c}{k+1},      
\]
and we have that for all $x\in [-1,1]$, due to boundary conditions, 
\[
L_{\xi}\delta_x (A) \geq \frac{k}{k+1} c \cdot m(A).     
\]

This implies that for all $\xi\geq 1$, we have that 
\[
L_{\xi}\delta_x (A) \geq \frac{c}{2} \cdot m(A).
\]

By \cite[Theorem 16.2.4]{MeyTwee} and the fact that $L_{\xi}$ maps measure into 
absolutely continuous measures, i.e., Lemma \ref{lemma:measuretoac}, we have that 
\[
||P^n_{\xi}|_{\mathcal{U}_0}||_{L^1}\leq 4\rho^n; 
\]
where $\rho = 1-c/2$.
\end{proof}
\begin{corollary}\label{cor:unifcontr}
Suppose $P_{\xi}$ contracts the space of average $0$ functions in $L^1$ for $\xi>0$;
then there exists $C>0, 0<\theta<1$ such that
\[
||P^n_{\hat{\xi}}|_{\mathcal{U}_0}||_{L^1} \leq C\theta^n     
\]
for all $\hat{\xi}>\xi$.
\end{corollary}
\begin{proof}
Using a compactness argument, for all $\hat{\xi}$ in $(\xi,1]$ we have a uniform bound from Lemma \ref{lemma:miximpliesmix};
for $\xi>1$ we have a uniform bound from Lemma \ref{lemma:biggerthan1}.
\end{proof}

\subsection{$L^r$ continuity of the stationary measure with respect to the noise size}
In this subsection we prove continuity in $L^r([-1,1])$ of 
the stationary measure with respect to the noise size at a 
fixed noise size $\xi>0$.

\begin{lemma}\label{lemma:continuity}
Suppose that $P_{\xi}$ contracts the space of average $0$ functions in $L^1$.
Moreover,suppose that there exists a $0<\epsilon<\xi$ such that 
for all $\hat{\xi}$ in $(\xi-\epsilon, \xi+\epsilon)$ 
the operator $P_{\hat{\xi}}$ has a unique fixed density $f_{\hat{\xi}}$.

Then
\[
\lim_{\hat{\xi}\to \xi}||f_{\hat{\xi}}-f_{\xi}||_{L^1}=0.
\]
\end{lemma}
\begin{proof}
A bounded variation function on the interval has a countable set of 
discontinuity points, since it can be written as the difference of two monotone
functions \cite{AmFuPa}.

Moreover, a bounded variation function is bounded; fix $\xi$ and let $\Omega_{\xi}$
be the set of discontinuities of $\rho_{\xi}$.
We claim that 
\[
\lim_{\hat{\xi}\to \xi}||\hat{\rho}_{\hat{\xi}}-\hat{\rho}_{\xi}||_{L^1} = 0,     
\]
where $\hat{\rho}_{\hat{\xi}}$, $\hat{\rho}_{\xi}$ are the extensions
of $\rho_{\hat{\xi}}$ and $\rho_{\xi}$ respectively to $\mathbb{R}$.  

Observe that 
\[
||\hat{\rho}_{\hat{\xi}}-\hat{\rho}_{\xi}||_{L^1}=\int_{[-\hat{\xi},\hat{\xi}]\setminus{\Omega_{\xi}}}|\hat{\rho}_{\hat{\xi}}(x)-\hat{\rho}_{\xi}|dm\leq \max(\frac{1}{\hat{\xi}}, \frac{1}{\xi})||\rho||_{BV}.
\]

By the dominated convergence theorem we have then that 
\begin{align*}
\lim_{\hat{\xi}\to \xi} \int_{[-\hat{\xi},\hat{\xi}]\setminus{\Omega_{\xi}}}|\hat{\rho}_{\hat{\xi}}(x)-\hat{\rho}_{\xi}(x)|dm &= \int_{[-\hat{\xi},\xi) \cup (\xi, \hat{\xi}]}\lim_{\hat{\xi}\to \xi}|\hat{\rho}_{\hat{\xi}}(x)|dm+\\ 
&\int_{[-\xi,\xi]\setminus{\Omega_{\xi}}} \lim_{\hat{\xi}\to \xi}|\hat{\rho}_{\hat{\xi}}(x)-\hat{\rho}_{\xi}(x)|dm   
\end{align*}
which goes to $0$ as $\hat{\xi}$ goes to $\xi$.

By $L^1$ continuity of the convolution this in turn implies that
\[
\lim_{\hat{\xi}\to \xi}||P_{\hat{\xi}}-P_{\xi}||_{L^1\to L^1}=0,
\] 
and 
\begin{align*}
||f_{\xi}-f_{\hat{\xi}}||_{L^1}\leq ||P_{\xi}^N (f_{\xi}-f_{\hat{\xi}})||_{L^1}+||P_{\xi}^N f_{\hat{\xi}}-P_{\hat{\xi}}^N f_{\hat{\xi}}||_{L^1}.
\end{align*}

Since $||P^i_{\xi}|_{\mathcal{U}_0}||_{L^1}<C \theta^i$ where 
$0<\theta<1$, there exists a positive $N$ such that $C\theta^N<1/2$, since $f_{\xi}-f_{\hat{\xi}}$ is 
an average $0$ function in $L^1$ we have that
\begin{align*}
||f_{\xi}-f_{\hat{\xi}}||_{L^1}\leq \frac{1}{2}||f_{\xi}-f_{\hat{\xi}}||_{L^1}+ ||P_{\xi}^N f_{\hat{\xi}}-P_{\hat{\xi}}^N f_{\hat{\xi}}||_{L^1},
\end{align*}
and we estimate the right hand side by telescopizing the difference of powers:
\begin{align*}
||P_{\xi}^N f_{\hat{\xi}}-P_{\hat{\xi}}^N f_{\hat{\xi}}||_{L^1}\leq &\sum_{k=0}^{N-1} ||P^{k}_{\xi}|_{\mathcal{U}_0}||_{L^1\to L^1}||P_{\xi}-P_{\hat{\xi}}||_{L^1\to L^1} ||P^{N-k-1}_{\hat{\xi}} f_{\hat{\xi}}||_{L^1}\\
&\leq \sum_{k=0}^{N-1} ||P^{k}_{\xi}|_{\mathcal{U}_0}||_{L^1\to L^1}||P_{\xi}-P_{\hat{\xi}}||_{L^1\to L^1} ||f_{\hat{\xi}}||_{L^1}\\
&\leq \frac{C}{1-\theta}||P_{\xi}-P_{\hat{\xi}}||_{L^1\to L^1};
\end{align*}
where we used that $||f_{\hat{\xi}}||_{L^1}=1$.
This implies that   
\[
||f_{\xi}-f_{\hat{\xi}}||_{L^1}\leq 2C\frac{1}{1-\theta}||P_{\xi}-P_{\hat{\xi}}||_{L^1\to L^1}
\]
Taking the limit as $\hat{\xi}\to \xi$ we conclude the proof.
\end{proof}
\begin{remark}\label{rem:rightcont}
The same argument can be used to
prove right continuity of the stationary measure in $L^1$.
The main difference is that we can drop the hypothesis of the uniqueness of the stationary measure
since by Corollary \ref{cor:unifcontr} the contraction of the space of average $0$ functions at $\xi$ implies uniform contraction 
for all $\hat{\xi}\geq\xi$, so the uniqueness of $f_{\hat{\xi}}$ follows. 
\end{remark}
\begin{remark}
Lemma \ref{lemma:continuity} proves the continuity at $\xi$ in $L^1$ norm of the 
stationary density if the operator $P_{\xi}$ is contracting the space of average $0$ measures
and the stationary measure is unique in a neighborhood of $\xi$.

Even if we have uniform contraction rates, we can only prove continuity
in $L^1$ of the stationary density as a function of $\xi$.

More regular noise kernel allow us to prove stronger regularity of $f_{\xi}$
as a function of $\xi$, which reflects in stronger regularity of the Birkhoff 
averages of observables as a function of $\xi$.
\end{remark}

\begin{corollary}\label{cor:continuityLr}
Suppose that $P_{\xi}$ contracts the space of average $0$ functions in $L^1$.
Moreover,suppose that there exists a $0<\epsilon<\xi$ such that 
for all $\hat{\xi}$ in $(\xi-\epsilon, \xi+\epsilon)$ 
the operator $P_{\hat{\xi}}$ has a unique fixed density $f_{\hat{\xi}}$.
     
Then, for any $1<r<+\infty$
\[
\lim_{\hat{\xi}\to \xi}||f_{\hat{\xi}}-f_{\xi}||_{L^r}=0.
\]
\end{corollary}
\begin{proof}
Recall that if $f_{\xi}$ is a fixed point of $P_{\xi}$ 
with $||f_{\xi}||_{L^1}=1$ we have that 
\[
||f_{\xi}||_{BV} = ||P_{\xi}f_{\xi}||_{BV}\leq ||P_{\xi}||_{L^1\to BV} ||f_{\xi}||_{L^1}\leq 3||\rho_{\xi}||_{BV}.
\]

The $BV$ norm bounds from above the $L^{\infty}$-norm, so 
\[
||f_{\xi}-f_{\hat{\xi}}||_{\infty}\leq 6\frac{||\rho||_{BV}}{\xi-\epsilon}.    
\]

Therefore, $f_{\xi}-f_{\hat{\xi}}$ belongs to $L^1 \cap L^{\infty}$ and by the classical 
$L^p$ interpolation inequality we have that 
\[
||f_{\xi}-f_{\hat{\xi}}||_{L^r}\leq \left(||f_{\xi}-f_{\hat{\xi}}||_{L^1}\right)^{1/r}\cdot \left(||f_{\xi}-f_{\hat{\xi}}||_{L^{\infty}}\right)^{1-1/r}. 
\]

Therefore, 
\[
\lim_{\hat{\xi}\to \xi}||f_{\xi}-f_{\hat{\xi}}||_{L^r}\leq \left(6\frac{||\rho||_{BV}}{\xi-\epsilon}\right)^{1-1/r} \lim_{\hat{\xi}\to \xi} \left(||f_{\xi}-f_{\hat{\xi}}||_{L^1}\right)^{1/r} = 0.
\]

\end{proof}

\begin{corollary}\label{cor:continuityfunc}
Let $\phi\in L^p([-1,1])$, with $p>1$; suppose there exists a $\xi_0$ such that 
$P_{\xi_0}$ contracts the space of average $0$ functions in $BV$ or equivalently $L^1$.
Then, the function 
\[
A_{\phi}(\xi) = \int_{-1}^1 \phi d\mu_{\xi}      
\] 
is well defined and continuous for all $\xi\geq \xi_0$.
\end{corollary}
\begin{proof}
By Lemma \ref{lemma:miximpliesmix} we have that $P_{\xi}$ contracts the space
of average $0$ functions in $BV$ for all $\xi\geq \xi_0$
This implies by Lemma \ref{lemma:uniqueness} that for each $\xi\geq \xi_0$
there exists a unique stationary measure $\mu_{\xi}$ with density $f_{\xi}$ in $BV$.
By Lemma \ref{cor:continuityLr}, fixing $\epsilon< \xi-\xi_0$ and letting 
$q>0$ be such that $1/q+1/p=1$
we have that, for $\hat{\xi}\in (\xi-\epsilon, \xi+\epsilon)$
\[
\lim_{\hat{\xi}\to \xi}|\int_{-1}^1 \phi f_{\xi} dm-\int_{-1}^1 \phi f_{\hat{\xi}}dm| \leq \lim_{\hat{\xi}\to \xi}||\phi||_{L^p}||f_{\xi}-f_{\hat{\xi}}||_{L^q}=0     
\]
which implies the continuity of $A_{\phi}$ for all $\xi> \xi_0$.
The proof of \ref{lemma:continuity} can be redone verbatim for right continuity 
at $\xi_0$ as explained in remark \ref{rem:rightcont}, which implies right continuity 
at $\xi_0$ and the thesis.
\end{proof}

\begin{corollary}\label{cor:bignoiselimitfunc}
Let $\phi\in L^1([-1,1])$; suppose there exists an $0<\xi_0<+\infty$ such that 
$P_{\xi_0}$ contracts the space of average $0$ functions in $BV$ or equivalently $L^1$.
Then, if  
\[
A_{\phi}(\xi) = \int_{-1}^1 \phi d\mu_{\xi}      
\] 
we have that
\[
\lim_{\xi \to +\infty}A_{\phi}(\xi) = \int_{-1}^1 \phi \frac{1}{2}dm.     
\]
\end{corollary}
\begin{proof}
As in Corollary \ref{cor:continuityfunc} the function is well defined for all $\xi\geq \xi_0$.
We have that 
\[
\lim_{\xi\to +\infty} |\int_{-1}^1 \phi f_{\xi}dm -\int_{-1}^1 \phi \frac{1}{2}dm|\leq \lim_{\xi\to +\infty} ||\phi||_{L^1}||f_{\xi}-\frac{1}{2}||_{L^{\infty}}=0,
\]
recalling that the $BV$ norm bounds from above the $L^{\infty}$ norm, the thesis follows from Corollary \ref{cor:bignoise} .
\end{proof}

\subsection{Continuity with respect to the base dynamic $T$}
To study the behavior as the base dynamic varies, we will use the following arguments by M. Monge, 
that was proved for a version of \cite{GaMoNi}.

\begin{definition}
A piecewise continuous map $T$ on $[-1,1]$ is a function $T:[-1,1]\rightarrow
\lbrack -1,1]$ such that there is partition $\{I_{i}\}_{1\leq i\leq k}$ of $%
[-1,1]$ made of intervals $I_{i}$ such that $T$ has a continuous extension to
the closure $\bar{I}_{i}$ of each interval.
We call this partition the \textbf{continuity partition} of $T$.

If two piecewise continuous maps $T_1$ and $T_2$ share the same continuity partition
we define 
\[
||T_1-T_2||_{\infty} = \max_i \sup_{x\in I_i} |T_1(x)-T_2(x)|.     
\]
\end{definition}
\begin{remark}
Remark that a piecewise continuous map is uniformly continuous when restricted to
each $I_i$ in its continuity partition.
\end{remark}
\begin{remark}
The condition that two maps share the same continuity partition is used to 
generalize the sup distance on continuous maps to piecewise continuous maps; 
as observed by one of the referees the arguments in the rest of the section
do not depend strictly on it but the treatment is easier if 
we assume it.
\end{remark}

\begin{definition}
The Wasserstein-Kantorovich distance of two probability measures is defined as
\[
W(\mu, \nu)=\sup_{\Lip(\phi)\leq 1, ||\phi||_{\infty}=1}\left|\int \phi d\mu - \int \phi d\nu \right|
\]
\end{definition}
\begin{remark}
We refer to \cite{GaMoNi} for the properties of the Wasserstein-Kantorovich distance we use.
It is worth observing that
\[
W(\delta_p, \delta_q)=|p-q|.
\]
\end{remark}

We now give a proof of \cite[Lemma 51]{GaMoNi}, starting by proving 
another property of bounded variation functions.

\begin{lemma}
Let $\phi$ be a bounded variation function on $[a,b]$, zero outside of $[a,b]$. 
Let $\tau_h$ be the translation operator $\tau_h(\phi)(x)=\phi(x+h)$.
Then 
\[
||\tau_h \phi-\phi||_{L^1([a-h, b+h])}\leq h \left(\Var_{[a,b]}(\phi)+4\sup_{[a,b]}|\phi(x)|\right).
\] 
\end{lemma}
\begin{proof}
Without loss of generality, suppose $h>0$, the negative case is analogous.

We start by observing that
\begin{align*}
&||\tau_h(\phi)-\phi||_{L^1([a-h, b+h])}\\
&=\int_{a-h}^{a} |\phi(x+h)|dx + \int_{a}^{b-h}|\phi(x+h)-\phi(x)|dx+\int_{b-h}^b |\phi(x)|dx\\
&\leq \int_{a}^{b-h}|\phi(x+h)-\phi(x)|dx+2\sup_{[a,b]}|\phi(x)|\cdot h.
\end{align*}
     
Let now $N$ be the biggest integer such that $N h \leq b-h-a$;
then, the intervals $I_i = [a+i h, a+(i+1) h]$ for $i = 0, \ldots, N-1$,
and $J= [a+Nh, b-h]$ are a partition of $[a, b-h]$; remark that $m(J)<h$.
Then
\begin{align*}
\int_{a}^{b-h}&|\phi(x+h)-\phi(x)|dx = \sum_i \int_{I_i} |\phi(x+h)-\phi(x)|dx +\int_J |\phi(x+h)-\phi(x)|dx\\
&\leq \sum_{i=0}^{N-1} \int_0^{h}|\phi(a+(i+1)h+z)-\phi(a+ih+z)|dz + 2\sup_{[a,b]}|\phi(x)| \cdot h,     
\end{align*}
where on each $U_i$ we used the change of coordinates $x = a+ih+z$. Now, we have that 
\begin{align*}
\int_0^{h} \sum_i |\phi(a+(i+1)h+x)-\phi(a+ih+x)| dx \leq \int_{0}^{h} \Var_{[a+x, b+x]}(\tau_{x}\phi) dx.      
\end{align*}
since the variation is translation invariant, we have then that 
\[
\int_{a}^{b-h}|\phi(x+h)-\phi(x)|dx \leq \left(\Var_{[a,b]}(\phi)+ 2\sup_{[a,b]}|\phi(x)|\right)\cdot h
\]

Summarizing, using the fact that $\phi$ is zero outside of $[a,b]$, we have
\[
||\tau_h(\phi)-\phi||_{L^1([a-h, b+h])}\leq \left(\Var_{[a,b]}(\phi)+4\sup_{[a,b]}|\phi(x)|\right)h.
\]
\end{proof}

\begin{lemma}\label{lemma:convlip}
Let $\phi$ be a bounded variation function on $[a,b]$, zero outside of $[a,b]$, and let
$\psi\in L^{\infty}(\mathbb{R})$.
Then, their convolution $\phi\ast \psi$ is a Lipschitz function with Lipschitz constant 
bounded above by $\left(\Var_{[a,b]}(\phi)+4\sup_{[a,b]}|\phi(x)|\right)||\psi||_{\infty}$.
\end{lemma}
\begin{proof}
This follows from the definition of convolution
\begin{align*}
&|\phi\ast \psi(x+h)-\phi\ast \psi(x)| = \left|\int_{\mathbb{R}}\left(\phi(x+h-y)-\phi(x-y)\right)\psi(y)dy\right|\\
&\leq ||\psi||_{L^{\infty}}  \int_{\mathbb{R}} \left|\phi(x+h-y)-\phi(x-y)\right| dy = ||\psi||_{L^{\infty}}||\tau_h(\tau_x\phi)-\tau_x\phi||_{L^1(\mathbb{R})}\\
&= ||\psi||_{L^{\infty}}||\tau_h(\phi)-\phi||_{L^1([a-h, b+h])}\leq ||\psi||_{L^{\infty}}\left(\Var_{[a,b]}(\phi)+4\sup_{[a,b]}|\phi(x)|\right)h. 
\end{align*}
where we used the fact that $\phi$ is $0$ outside $[a,b]$ and invariance of the $L^1$ norm by $\tau_x$.
\end{proof}
\begin{lemma}
Let now $\mu$ and $\nu$ be probability measures on $[-1,1]$; as in Lemma \ref{lemma:measuretoac} we have that
$\hat{\rho}_{\xi}\ast \hat{\mu}$ and $\hat{\rho}_{\xi}\ast\hat{\nu}$ are absolutely continuous with respect to Lebesgue,
let $f$ and $g$ be their densities. Then 
\[
||f-g||_{L^1([-1-\xi, 1+\xi])}\leq \left(\Var(\rho_{\xi})+4\sup_{[-\xi,\xi]}|\rho_{\xi}(x)|\right)W(\mu, \nu)\leq 5||\rho_{\xi}||_{BV}W(\mu, \nu).
\]
\end{lemma}
\begin{proof}
Recall that $f$ and $g$ are $0$ outside $[-1-\xi, 1+\xi]$; for each $\psi\in L^{\infty}(\mathbb{R})$ we have that 
\begin{align*}
&\left|\int \psi (f-g) dx \right| = \left|\int \psi d(\hat{\rho}_{\xi}\ast \hat{\mu}) -\int \psi d(\hat{\rho}_{\xi}\ast \hat{\nu}) \right| \\
&= \left|\int \int  \psi(x) \hat{\rho}_{\xi}(x-y) dm(x)d\hat{\mu}(y) -\int \int  \psi(x) \hat{\rho}_{\xi}(x-y) dm(x)d\hat{\nu}(y) \right| \\
&= \left|\int (\psi\ast \hat{\rho}_{\xi})  d\hat{\mu} -\int (\psi\ast\hat{\rho}_{\xi})  d\hat{\nu} \right|.
\end{align*}

By Lemma \ref{lemma:convlip} and definition of Wasserstein distance we have then
\[
\left|\int \psi (f-g) dx \right|\leq \left(\Var_{[-\xi,\xi]}(\rho_{\xi})+4\sup_{[-\xi,\xi]}|\rho_{\xi}(x)|\right) ||\psi||_{L^{\infty}}\cdot W(\mu, \nu),
\]
which in turn implies the thesis by taking as $\psi$ the function with value $1$ if $f(x)\geq g(x)$ and 
value $-1$ if $f(x)<g(x)$.
\end{proof}
\begin{remark}
The constants in the preceding lemmas are not optimal, but are enough for our 
goal of studying the continuity of the stationary density with respect 
to the parameters of the system in the presence of positive amplitude 
noise.
\end{remark}

\begin{lemma}\label{lemma:wassT}
Let $T_{1}$ and $T_{2}:[-1,1]\rightarrow [-1,1]$ be piecewise
continuous nonsingular maps that share the same continuity partition 
and let $L_{T_{1}},L_{T_{2}}$ the associated transfer operators, 
let $f\in L^{1}$. Then:
\begin{equation*}
W(L_{T_{1}}(fdm), L_{T_{2}}(fdm))\leq ||T_{1}-T_{2}||_{\infty}||f||_{1},
\end{equation*}
or equivalently
\[
W((P_{T_{1}}f)dm, (P_{T_{2}}f)dm)\leq ||T_{1}-T_{2}||_{\infty}||f||_{1}.
\]
\end{lemma}
\begin{proof}
Let $[a,b]$ be an interval and let $\mathcal{P} = \{p_0=a, \ldots, p_n=b\}$
be the endpoints of a partition such that $p_{i+1}-p_i\leq D$ 
(in the following we will call $D$ the diameter of the partition).

Let $f\in L^1([a,b])$ be a positive function; 
the projection of $f dm$ associated to the partition $\mathcal{P}$ is  
\[
\pi_{\mathcal{P}}f = \sum_{i=0}^{n-1} \left(\int_{p_i}^{p_{i+1}} f dm\right) \cdot \delta_{p_i}
\]
where $\delta_{p_i}$ is the Dirac $\delta$ at $p_i$.

Then, for any Lipschitz function $\phi$ on $[a,b]$ we have
\[
\left|\sum_{i=0}^{n-1} \int_{p_i}^{p_{i+1}}\phi(p_i)f(x)dm - \int_{p_i}^{p_{i+1}}\phi(x)f(x)dm\right|\leq \textrm{Lip}(\phi)||f||_{L^1([a,b])}\cdot D,     
\]
which implies
\[
W(\pi_{\mathcal{P}}f, fdm)\leq D ||f||_{L^1([a,b])}.
\]

Fix $\epsilon>0$, by uniform continuity there exists a $D$ such that the image of a partition 
of diameter $D$ has diameter at most $\epsilon$; 
let $f$ be a density, and let $\mathcal{P}$ be a partition of diameter $D$, as above.

For a Dirac $\delta_p$ at $p$, we have that 
\[
L_{T_i}\delta_p = \delta_{T_i(p)}
\]
for $i=1,2$, which implies
\[
W(L_{T_1}\delta_p,L_{T_2}\delta_p)\leq ||T_1-T_2||_{\infty}
\]

By triangle inequality, this implies that
\[
W(L_{T_1}\pi_{\mathcal{P}} f, L_{T_2}\pi_{\mathcal{P}} f)\leq \sum_i  \left(\int_{p_i}^{p_{i+1}}fdm\right)  \cdot W(L_{T_1}\delta_{p_i}, L_{T_2}\delta_{p_i})\leq ||T_1-T_2||_{\infty}||f||_{L^1}.   
\]

Now, any Lipschitz function $\phi$ is bounded; by the duality properties of the transfer operator
and the Koopman operator, we have
\[
\left|\int \phi L_{T_1}(f dm)-\sum_i \phi(T_1(p_i))\int_{p_i}^{p_{i+1}} f dm\right|=\left|\sum_i \int_{p_i}^{p_{i+1}} (\phi\circ T_1(x)- \phi\circ T_1(p_i)) f dm \right|,     
\]
which in turn implies
\[
W(L_{T_1}(f dm), L_{T_1}\pi_{\mathcal{P}} f)\leq \epsilon,
\]
and similarly for $T_2$.

This implies that
\[
W(L_{T_1}(f dm), L_{T_2}(f dm))\leq W(L_{T_1}\pi_{\mathcal{P}} f, L_{T_2}\pi_{\mathcal{P}} f)+2\epsilon,     
\]
as $\epsilon$ is arbitrary, we obtain the thesis.
\end{proof}

\begin{definition}
Let $T_{1}$ and $T_{2}:[-1,1]\rightarrow \lbrack -1,1]$ be
piecewise continuous nonsingular maps that share the same  
continuity partition.
We will denote by
\[
L_{\xi, T_i}= \pi_*(\rho_{\xi }\ast L_{T_{i}}), \quad \textrm{for } i=1,2,
\]
and the associated annealed Perron-Frobenius operators
\begin{equation*}
P_{\xi ,T_{i}}f=\pi_*(\rho_{\xi }\ast P_{T_{i}}(f)) \quad \textrm{for } i=1,2.
\end{equation*}
     
\end{definition}

\begin{lemma}\label{prt}
Let $T_{1}$ and $T_{2}:[-1,1]\rightarrow \lbrack -1,1]$ be
piecewise continuous nonsingular maps that share the same  
continuity partition.
Then for any $f\in L^{1}$:
\begin{equation*}
||P_{\xi ,T_{1}}(f)-P_{\xi ,T_{1}}(f)||_{1}\leq ||T_{1}-T_{2}||_{\infty
}5||\rho _{\xi }||_{BV}||f||_{1}
\end{equation*}
\end{lemma}
\begin{proof}
\begin{eqnarray*}
||P_{\xi ,T_{1}}(f)-P_{\xi ,T_{2}}(f)||_{1} &=&||\pi_*||_{BV \to BV}||\hat{\rho} _{\xi }\ast%
(P_{T_{1}}(f)-P_{T_{2}}(f))||_{1} \\
&\leq &5||\rho _{\xi }||_{BV}\cdot W(P_{T_{1}}(f)dm,P_{T_{2}}(f)dm),
\end{eqnarray*}%
where the operator norm of $\pi_*:{BV([-1-\xi, 1+\xi])\to BV([-1,1])}$ is bounded by Lemma \ref{lem:proj}.
The statement then follows by Lemma \ref{lemma:wassT}.
\end{proof}

\begin{lemma}\label{lem:contmap}
Let $T_1$ and $T_2$ be
piecewise continuous nonsingular maps that share the same  
continuity partition.
Suppose $P_{\xi, T_1}$ contracts the space of average $0$ functions in $L^1$
with constants $C>0$ and $0<\theta<1$ then 
\[
||P_{\xi, T_1}^n f-P_{\xi, T_2}^n f||_{L^1}\leq \frac{C}{1-\theta}||T_1-T_2||_{\infty}5||\rho_{\xi}||_{BV}.
\]
\end{lemma}
\begin{proof}
This follows from a telescopization argument:
\[
||P_{\xi, T_1}^n f-P_{\xi, T_2}^n f||_{L^1}\leq \sum_{i=0}^n ||P_{\xi, T_1}^i||_{L^1\to L^1}||P_{\xi, T_1}-P_{\xi, T_2}||_{L^1\to L^1}||P^{n-i-1}_{\xi, T_2}f||_{L^1}.
\] 
Since $||P_{\xi, T_2}f||_{L^1}\leq ||f||_{L^1}$ and by Lemma \ref{prt}, we have that
\[
||P_{\xi, T_1}^n f-P_{\xi, T_2}^n f||_{L^1}\leq \sum_{i=0}^n C\theta^i ||T_{1}-T_{2}||_{\infty
     }5||\rho _{\xi }||_{BV}||f||_{1}
\]
and the thesis follows.
\end{proof}

\section{Proof of Theorem \ref{thm:result}}\label{sec:suff}
The results in Section \ref{sec:annealed} already allow us to prove Theorem \ref{thm:result}.

\begin{proof}[Proof of Theorem \ref{thm:result}]

Hypothesis R2 and R4 together with Lemma \ref{lemma:miximpliesmix}
prove that for all $\xi\geq \xi_1$ the operator $P_{\xi}$ contracts the space
of average $0$ functions in $BV$ (and equivalently in $L^1$).
This guarantees uniqueness of the stationary measure.

Hypothesis R3 guarantees that $\ln(|T'|)\in L^p(m)$ for $p> 1$; 
by Corollary \ref{cor:continuityfunc} together with 
Hypothesis R1 this allows us to prove that the function
\[
\lambda(\xi) = \int_{-1}^1 \ln(|T'|)d\mu_{\xi}     
\]
is well defined and continuous in $[0, +\infty)$.

Corollary \ref{cor:bignoiselimitfunc} together with Hypothesis D2 
and R3 allow us to state that
\[
\lambda(0)>0, \quad \lim_{\xi\to +\infty}\lambda(\xi)<0,
\]
therefore our system shows Noise Induced Order.
\end{proof}

\section{Consequences for the model}\label{sec:consequences}
In this section, the noise kernel is
\[
\rho(x)=\frac{1}{2}\chi_{[-1, 1]}
\]
the (normalized) characteristic function of the interval $[-1,1]$.

The family $T_{\alpha, \beta}:[-1,1]\to [-1, 1]$ is defined by
\begin{equation}\label{eq:model}
T_{\alpha, \beta}(x) = 1-2\beta|x|^{\alpha}.
\end{equation}

\subsection{Deterministic behavior}
The family $T_{\alpha, \beta}$ for $\alpha\geq 2$ and $0< \beta\leq 1$
is a family of unimodal maps, a classical example of non-uniformly hyperbolic 
dynamics.

In this family, the prototypical example is the quadratic family, i.e.,
$T_{2, \beta}$ as $\beta$ varies; the long term behavior of the system is strongly 
sensitive with respect to the parameter $\beta$: 
outside a parameter set of Lebesgue measure $0$ (the infinitely renormalizable parameters \cite{Lyu}), 
the parameters can be classified into two categories:
\begin{itemize}
\item a dense subset of \textbf{regular parameters} where all the points converge to a periodic attracting orbit
\item a positive measure Cantor set of \textbf{stochastic parameters} that admit an absolutely 
continuous invariant probability measure and have positive Lyapunov exponent.
\end{itemize} 

It is worth discussing the properties of the Schwarzian derivative 
for the family $T_{\alpha, \beta}$.
\begin{lemma}
For $\alpha>1$ the Schwarzian derivative of $T_{\alpha, \beta}$ is well defined and 
negative in $[-1,0) \cup (0, 1]$.
\end{lemma}
\begin{proof}
This follows from computation:
\[
S(T_{\alpha, \beta}) = \left(\frac{T''_{\alpha, \beta}}{T'_{\alpha, \beta}}\right)'-\frac{1}{2}\left(\frac{T''_{\alpha, \beta}}{T'_{\alpha, \beta}}\right)^2.     
\]
For $x>0$, we have that 
\[
\frac{T''_{\alpha, \beta}}{T'_{\alpha, \beta}}(x) = \frac{-2\beta\alpha(\alpha-1)x^{\alpha-2}}{-2\beta\alpha x^{\alpha-1}}=\frac{\alpha-1}{x},\quad \frac{T'''_{\alpha, \beta}}{T'_{\alpha, \beta}}(x)=\frac{(\alpha-1)(\alpha-2)}{x^2}.
\]
for $x<0$, similarly we have that
\[
\frac{T''_{\alpha, \beta}}{T'_{\alpha, \beta}}(x) = \frac{-2\beta\alpha(\alpha-1)(-x)^{\alpha-2}}{2\beta\alpha (-x)^{\alpha-1}}=-\frac{\alpha-1}{(-x)}=\frac{\alpha-1}{x}.
\]
and 
\[
\frac{T'''_{\alpha, \beta}}{T'_{\alpha, \beta}}(x) = \frac{2\beta\alpha(\alpha-1)(\alpha-2)(-x)^{\alpha-3}}{2\beta\alpha (-x)^{\alpha-1}}= \frac{(\alpha-1)(\alpha-2)}{x^2}.
\]

Therefore, for $x\in [-1,0)\cup(0, 1]$ we have that 
\[
S(T_{\alpha, \beta})(x)= \frac{(\alpha-1)(\alpha-2)}{x^2}-\frac{3}{2}\frac{(\alpha-1)^2}{x^2} = -\frac{1}{2}\frac{\alpha^2-1}{x^2}<0.
\]
\end{proof}
\begin{remark}
The unique point where the Schwarzian derivative of $T_{\alpha,\beta}$ is not 
defined is the critical point. 
This observation is not new, and is used extensively in \cite{Me}; intuitively,
due to the slow recurrence of the critical orbit to the critical point,
the levels of our tower are avoiding the critical point. 

Therefore, when we build the induction scheme, the Schwarzian derivative of the iterates 
is going to be definite and negative.
\end{remark}

We will prove now that our systems, when $\alpha\geq 2$ and $\beta=1$ satisfy the 
hypothesis of \cite[Theorem I.5]{TheTreYou}.
This permits us to state that $\beta$ is a density point of \textbf{stochastic} parameters,
i.e., parameters that admit an a.c.i.p. and have positive Lyapunov exponent.

To avoid notation clutter, in some of the following equations we are going to use the notation
$f_{\beta}(x):=f(\beta, x)$, and the notation $c_n(\beta) := f_{\beta}^n(0)$ for 
the critical orbit.

\begin{definition}
We say $f(\beta, x)$ is a \textbf{regular family} if 
\begin{enumerate}
\item $f(\beta, x)$ is $C^2$ in $x, \beta$;
\item $c=0$ is the unique critical point of $f(\beta, x)$, $f(\beta, x)$ is increasing on 
$[-1, 0)$, decreasing on $(0,1]$, $c_2(\beta)<0< c_1(\beta)$
and $c_2(\beta)\leq c_3(\beta)$, and for all $x\in (-1,0]$
we have that $f(\beta, x)>x$;
\item there exists constants $A^*_1, A^*_2$ and $\tau\geq 2$ such that for all $\beta$
\[
A^*_1|x|^{\tau-1}\leq |D_x f_{\beta}(x)|\leq A^*_2|x|^{\tau-1}
\]
and
\[
\frac{|D_x f_{\beta}(x)|}{|D_x f_{\beta}(y)|}\leq \exp \bigg(C_*\left|\frac{x}{y}-1\right|\bigg)     
\]
\end{enumerate} 
\end{definition}
\begin{lemma}
Fixed $\tilde{\alpha}\geq 2$ the family $f(\beta, x) := T_{\tilde{\alpha}, \beta}(x)$ is a regular family.
\end{lemma}
\begin{proof}
Item (1), (2) and the first part of item (3) are trivial, 
the second part of item (3) follows from the fact that 
\[
(\alpha-1)\ln\left(\frac{|x|}{|y|}\right)\leq (\alpha-1)\left(\frac{|x|}{|y|}-1\right).
\]
\end{proof}

\begin{definition}
A parameter $\beta$ is called a \textbf{perturbable parameter} if there exists a constant $\varepsilon^*>0$
such that
\begin{enumerate}
\item for every $\delta\in (0, \epsilon^*)$ and $n\geq 1$, if $x\in I$ satisfies $f^i_{\beta}(x)\notin (-\delta, \delta)$
and $f^n_{\beta}(x)\in (-\delta, \delta)$ then $|(f_{\beta}^n)'(x)|\geq \epsilon^*$, 
\item for all $n\geq 1$, $c_n(\beta)\geq \epsilon^*$ and $f_{\beta}$ has no stable periodic point,
\item $\lim_{n\to+\infty}\partial_{\beta} f_{\beta}^n(c_0(\beta))/\partial_x f_{\beta}^{n-1}(c_1(\beta))= Q^*\neq 0$.
\end{enumerate}
\end{definition}

\begin{lemma}
Fixed $\tilde{\alpha}\geq 2$, if we denote by $f_{\beta}(x):=T_{\tilde{\alpha}, \beta}(x)$, the parameter 
$\beta=1$ is a perturbable parameter.
\end{lemma}
\begin{proof}
Item (2) in the definition of perturbable parameter is trivial, since $c_2(1) = -1$, 
which is a fixed point.

Item (3) follows from the chain rule for the derivative with respect to the parameter, i.e.,
\[
\frac{\partial}{\partial \beta}(f(\beta, g(\beta, x)) = \frac{\partial f}{\partial \beta}(\beta, g(\beta, x))+\frac{\partial f}{\partial x}(\beta, g(\beta, x))\frac{\partial g}{\partial \beta}(\beta, x).     
\]
This allows us to check item (3); by a straightforward computation we have that
\[
\frac{\partial f}{\partial \beta}(1,-1)=2 \quad,  \frac{\partial f}{\partial x}(1,-1)=2\alpha 
\]
and that
\[
(\partial_{\beta} f_{1}^n)(0)=\partial_\beta f_{1}(c_{n-1}(1))+\partial_x f_{1}(c_{n-1}(1))\partial_\beta( f_{1}^{n-1}(0)),    
\]
since $c_{2}(1)=-1$, which is a fixed point, we have that
\[
(\partial_\beta f_{1}^n)(0)=2+2\alpha \partial_\beta(f_{1}^{n-1}(0)),    
\]
which in turn tells us that 
\[
(\partial_\beta f_{1}^n)(0)\sim (2\alpha)^{n-1}
\]
which in turn implies item (3).

The last condition we need to check is condition (1);
we will follow a classical construction from \cite{Ja}. 

We will denote by $\eta$ the positive fixed point of $f_{1}(x)$;
denote by $f_L$ the left branch of $f_{1}(x)$ and by 
$f_R$ the right branch.
We will identify by a string of ``R'' and ``L'' the preimages of 
$\eta$ through $f_R$ and $f_L$, i.e.,
\[
RLLL = f_R^{-1}(f_L^{-1}(f^{-1}_L(f^{-1}_L(\eta)));     
\]
we observe that $L = -\eta$.
We will denote by $L^k$ a sequence of $k$ consecutive ``L'' and similarly for ``R''. 

Outside of the domain $I = (-\eta, \eta)$ the map $f_1(x)$ 
is uniformly expanding.
The preimages $RL^k$ for $k=1, 2, \ldots$ are all bigger than $\eta$, so their left
and right preimages fall in $(-\eta, \eta)$
when taking their left and right preimages $LRL$, $RRL$, $LRLL$, $RRLL$,... 
we obtain a countable partition of $(-\eta, \eta)$.

Denote by $\Delta_k = (RRL^{k+1}, RRL^{k})$ and by $\Delta_{-k} = (LRL^{k+1}, LRL^{k})$;
observe that by construction $\Delta_k$ and $\Delta_{-k}$ are mapped diffeomorphically onto 
$(-\eta, \eta)$ by $f_1^{k+1}$.
Moreover, on $[-1,0) \cup (0,1]$ we have that $f_1$ has negative Schwarzian derivative;
this allows us to show that if $x$ belongs to $\Delta_k$, it will come back to $(-\eta,\eta)$ with derivative bigger 
than $1$, by Koebe distortion lemma.

Now, if $x\notin I$, $f_1^k(x)\notin I$ for $k=1,\ldots n$, $f_1^n(x)\in I$,
since $f_1$ is uniformly expanding outside $I$, the condition is satisfied.

If $x\in I$, then $x$ belongs to some $\Delta_i$ and if we
 denote by $r(x) = |i|+1$ the return time to $I$ then $x$
returns to $I$ after $r(x)$ iterations and $|Df_1^{r(x)}(x)|>1$.

If coming back it enters $(-\delta, \delta)$, then the condition is satisfied;
if it returns to $I\setminus (-\delta, \delta)$, then it will return to $I$ only after
$r(f_1^{r(x)}(x))$ steps, with derivative
\[
|Df_1^{r(f_1^{r(x)}(x))}(x)| = |Df_1^{r(f_1^{r(x)}(x))}(f_1^r(x))Df_1^{r(x)}(x)|>1.     
\]

The only remaining case is when $x$ starts outside $I$ and then hits 
$I\setminus (-\delta, \delta)$ in $k$ steps. In this case the modulus of the derivative 
is bigger than $1$ before $k$ and then we will need at least $r(f_1^k(x))$ steps to get back 
to $I$, guaranteeing that the derivative is bigger than $1$.
\end{proof}

This allows us to use \cite{TheTreYou} to prove the following.
\begin{theorem}[Theorem 1.5 \cite{TheTreYou}]\label{thm:ourthetreyou}
Let $\tilde{\alpha}\geq 2$ and let $f_{\beta}(x) = T_{\tilde{\alpha}, \beta}(x)$;
let $\beta=1$; there exists positive constants $C,\gamma, \lambda, \epsilon$ such that $1$ 
is a density point for the set of parameters $\Omega$ such that 
\begin{enumerate}
\item $f_{\beta}$ has no stable periodic point,
\item for all $n\geq 1$, $|f^n_{\beta}(0)|> \epsilon \exp(-n\gamma)$,
\item for all $n\geq 0$, $|(f_{\beta}^n)'(f_{\beta}(0))|> C \exp(n\gamma)$,
\item for all $n\geq 1$, if $x\in [-1,1]$ satisfies $f_{\beta}^k(x)\neq 0$ for all
$k=1,\ldots n-1$ and $f^n_{\beta}(x)=0$, then $|(f^n_{\beta})'(x)|\geq C\exp(n\lambda)$.
\end{enumerate}
This implies that $\beta=1$ is a density point for the set of parameters that admit an 
absolutely continuous invariant measure and with positive Lyapunov exponent with respect to this measure.
\end{theorem}
\begin{remark}
As pointed out by one of the referees, the family $T_{\alpha, \beta}$ is not $C^3$ for $2<\alpha<3$, 
so many results as in \cite{GaoShen, Shen} do not apply in this interval of exponents.
The results of \cite{TheTreYou} works under lower regularity conditions.

Indeed, many of the technical details in the next sections 
are needed to apply our theory to the maps $T_{\alpha,\beta}$
for $\alpha\in (2, 3)$. The treatment is simplified for 
systems with higher regularity.
\end{remark}

\subsection{Stochastic stability}\label{subsec:stochastic}
We remember that the noise is distributed uniformly, i.e., the mother noise kernel is
\[
\rho(x) = \frac{1}{2}\chi_{[-1,1]}.     
\]

We are interested in answering the following question: if $\mu_0$ 
is the invariant measure for the deterministic system and $\mu_{\xi}$
is the stationary measure for the random dynamical system with noise amplitude
$\xi$, is it true that $\mu_{\xi}$ goes to $\mu_0$ as 
the noise amplitude goes to $0$? And in which sense does this happen, i.e.,
is it convergence in the weak-* topology, or we 
can have stronger statements on the convergence?
This problem is called stochastic stability, and many results have appeared during the years
\cite{AlAr03,AlVil13,ArPaPi,BaVi,Me,Shen}, where stochastic stability is proved under different
hypothesis and regularity assumptions.

In \cite{BaVi} strong stochastic stability is proved that for $C^4$ unimodal maps with nondegenerate 
critical points, negative Schwarzian derivative and such that, if $c$ is the critical point,
there exists $\gamma>0$, $\lambda_c>1$, $H_0\geq 1$, $e^{2\gamma}<\sqrt{\lambda_c}$ such that 
\begin{itemize}
     \item $|T^k(c)|\geq e^{-\gamma k}$ for all $k>H_0$
     \item $|(T^k)'(T(c))|\geq \lambda_c^k$ for all $k>H_0$
     \item $f$ is topologically mixing on the interval bounded by $c_1$ and $c_2$.
\end{itemize}
This means that  
if $f_{\xi}$ is the density of 
$\mu_{\xi}$ and $f_0$ is the invariant density of the a.c.i.p. of $T$, we have that
$f_{\xi}$ converges to $f_{0}$ in $L^1$ norm.

The argument goes as follows, the condition above allows the authors in \cite{BaVi}
to construct a uniformly expanding\footnote{with respect to an adapted Riemann metric 
by conjugating the Perron-Frobenius operator by multiplication with a cocycle} 
tower extension of the dynamic $\hat{T}:\hat{I}\to \hat{I}$, where
$\hat{I}\subset \mathbb{N}\times [-1,1]$ is the union of sets of the form
$\{k\}\times B_k$ and the $B_k$'s are a partition of full measure of $[-1,1]$.
If $\Pi(k, x) = x$ is the projection taking a point in $\{k\}\times B_k$, we have
that $\Pi\circ \hat{T}= T\circ \Pi$.

This tower construction, as constructed in \cite{BaVi} works also for all 
deterministic perturbations $T(x)+\omega$ where $\omega<\epsilon_0$,
so they are able to construct an extension of the random dynamical system
$\hat{T}_{\xi}:\hat{I}\to \hat{I}$ such that $\Pi\circ \hat{T}_{\xi}= T_\xi\circ \Pi$,
and using a perturbation argument prove the following theorem.

\begin{theorem}\label{thm:sstability}
There exists an $\xi_0>0$ such that for all $\xi\in [0, \xi_0)$
the random dynamical system $\hat{T}_{\xi}$ on $\hat{I}$ admits a unique stationary measure
with density $\hat{f}_{\xi}$ in $BV$ with respect to the Lebesgue measure $\hat{m}$ in $\hat{I}$.
Moreover 
\[
\lim_{\xi\to 0^+}||\hat{f}_{\xi}-\hat{f}_0||_{BV}\to 0,
\]
which implies that $f_{\xi}:=\Pi_* \hat{f}_{\xi}$ converges to $f_0:=\Pi_* \hat{f}_{0}$ in $L^1$,
$\mu_{\xi}=f_{\xi}dm$ is a stationary measure for $T_{\xi}$, $\mu_0=f_0 dm$ is an invariant measure
for $T$ and, there exists $\tilde{C}>0, 0<\tilde{\theta}<1$ such that for all $\xi\in [0, \xi_0)$ 
\[
||P^n_{\xi}|_{\mathcal{U}_0}||_{BV}< \tilde{C}\tilde{\theta}^n.     
\]
\end{theorem}
We will not give a full proof of the Theorem, since it is quite a 
technical argument and the estimates can be done verbatim, but we will 
show where we can relax the hypothesis of negative Schwarzian derivative 
on the whole domain and the hypothesis that the map is $C^4$ on the whole domain.
\begin{proof}[Sketch of proof]
In the following, let $f(x) = T_{\alpha, \beta}(x)$, where $\beta$
is a stochastic parameter obtained from Theorem \ref{thm:ourthetreyou}.
Without loss of generality, to avoid cluttering with constants, we assume the parameter satisfies
\begin{itemize}
     \item $|c_n(\beta)|>e^{-\gamma n}$, for some small $\alpha$, $n\geq 1$ (slow recurrence to the critical point),
     \item $|(f^{n})'(c_1(\beta))|>\lambda_c^n$, for some $\lambda_c>1$, for all $n\geq 1$ (expansivity along the critical orbit),
\end{itemize}

Following \cite{Me} pag. 287 we fix $\lambda>1$ and $\rho<e^{\gamma}$ such 
that 
\[
e^{\gamma}\lambda\rho\leq \lambda_c^{1/\alpha},     
\]
where $\alpha$ is the exponent of $T_{\alpha,\beta}$, and letting $\gamma<\beta_1<\beta_2<2 \gamma$, 
the condition above implies that
\[
e^{\beta_i/2}\lambda\rho\leq \lambda_c^{1/\alpha}
\]
for $i=1,2$.

Let $c_k=f^k(0)$, and for all $k>0$ let $B_k=[a_{k}, b_k]$ be a set such that $[c_k-e^{-\beta_2 k}, c_k+e^{-\beta_2 k}]\supseteq B_k \supseteq [c_k-e^{-\beta_1 k}, c_k+e^{-\beta_1 k}]$;
due to the slow recurrence to the critical point, we have that $0 \notin B_k$ for all $k>0$;
let $B_0=[-1,1]$.

We fix a small $\delta>0$, to guarantee that once in the neighborhood $(-\delta, \delta)$ we will 
go up enough levels of the tower and, denoting by $f_t(x) = f(x)+t$ and, letting $E_k = B_k\times \{k\}$ for $k\geq 0$ and 
$\hat{I} = \bigcup_{k\geq 0} E_k$; we define $\hat{f}_t:\hat{I}\to \hat{I}$ 
\[
\hat{f}_t(x,k)=\left\{\begin{array}{cc}
(f_t(x), k+1) & \textrm{if $k\geq 1$ and $f_t(x)\in B_{k+1}$}\\
(f_t(x), 1) & \textrm{if $k=0$ and $x\in (-\delta, \delta)$}\\
(f_t(x), 0) & \textrm{otherwise}.
\end{array}\right.
\]
for all $t\in (-\epsilon_0, \epsilon_0)$ (with $\epsilon_0$ small).
The $\Pi$ map is defined as $\Pi(x,k)=x$.

We define the unperturbed cocycle $\omega_0:\hat{I}\to \mathbb{R}$
\[
\omega_0(x, k)=\left\{
\begin{array}{cc}
\frac{\lambda^k}{|(f^k)'(f^{-k}_+(x,k))|} & \textrm{if $(x,k)\in Im(\hat{f}^k)$}\\
0 & \textrm{otherwise}
\end{array}  
\right.
\]
where $f^{-k}_+(x,k) = y $ is the unique point in $(0, \delta)$ such that $\hat{f}^k((y,0))=(x,k)$,
we will not define the perturbed cocycle $\omega_{\epsilon}$ since the negative 
Schwarzian derivative hypothesis enters into play only in the proof of 
the properties of $\omega_0$

The negative Schwarzian derivative hypothesis is used only in 
\cite{BaVi} Lemma 4 and the Sublemma in Section 4.

We start by showing how to adapt the proof of \cite[Lemma 4]{BaVi}.
Note that the support of the cocycle $\omega_0$ in $E_k$ is an interval for each $k\geq 1$,
with endpoints in the set $\partial E_k \cup \{\hat{f}^k(0,0),\hat{f}^k(\delta,0), \hat{f}^k(-\delta,0)\}$.

For $k\geq 1$ let the subintervals of $E_k$ defined as $\beta^+_k = \{(y,k )\mid f(y)>b_{k+1}-\epsilon\}$
and $\beta^-_k = \{(y,k )\mid f(y)<a_{k+1}+\epsilon\}$, and by 
$\gamma^+_k$ and $\gamma^-_k$ respectively their intersection 
with the set $\{\omega_{\epsilon} (x,k)\neq 0\}$.

For $(y,k)\in \gamma^+_k$, and similarly for $\gamma^-_k$ we have that 
\[
\frac{\omega_0(y, k)}{|f'(y)|}=\frac{\lambda^k}{|(f^{k+1})'(\hat{f}_+^{-k}(y,k))|},     
\]
remark that neither $\hat{f}_+^{-k}(\gamma^+_k)$
nor $\Pi(E_j\cap \textrm{supp}(\omega_0))$ contain $0$ (refer to the proof \cite[Lemma 4]{BaVi}, Line 5), 
so the Schwarzian derivative of $f|_{\hat{f}_+^{-k}(\gamma^+_k)}$ is defined and negative 
and similarly for all its iterates. 
Therefore $|(f^{k+1})'(\hat{f}_+^{-k}(y,k))|$ has a unique maximum and 
Lemma 4 follows under our weaker hypothesis, by exchanging the order of 
the arguments in Line 4 and Line 5.

A similar argument works for the Sublemma in \cite[Section 4]{BaVi}, above equation 4.3, 
since $f^n_t$ has no critical points in $\underline{\gamma}$, 
and the point where the Schwarzian derivative is not defined correspond to the critical points,
the Schwarzian derivative of $f^n_t$ is defined and negative, implying that 
the function $g^{(n)}$ has at most a local minimum on $\underline{\gamma}$. 

We need to assess the lack of full $C^4$ regularity; the only place 
where the $C^4$ regularity of the map is used is in \cite[``Climbing the tower'' pag. 497]{BaVi}, 
to prove the regularity of the function $K(x)$ defined as 
\[
K(x)=\frac{|f'(x_{-})|}{|f'(x)|},     
\]
where $x_{-}$ is the unique point with $x_{-}\neq x$ and $f(x)=f(x_{-})$.
We need to prove that there exists finite constants $K$ and $\tilde{K}$ such that
\[
\sup_{x\neq 0} K(x)\leq K, \quad \Var_{x\neq 0}(K(x))\leq \tilde{K}.
\]
Remark that in our family, we have that
$K(x)\equiv 1$ for all $x\neq 0$; so these are trivially satisfied.

The proof then follows directly from the estimates in \cite{BaVi}.
\end{proof}

\begin{remark}
While we fixed the uniform noise kernel, the class of noise kernels for which the result in \cite{BaVi}
holds is larger: in our framework of rescaled noise $\rho_{\xi}(x) =\rho(x/\xi)/\xi$ they can be restated 
as the fact that $\rho$ is bounded (which follows from Bounded Variation) and the fact that,
if we denote by $J = \{t\mid \rho(t)>0\}$, $0\in J$ and $\ln(\rho|_J)$ is concave.  
\end{remark}
This has an important consequence, i.e., continuity of the Lyapunov exponent near $0$.
\begin{corollary}
In the hypothesis of Theorem \ref{thm:sstability}, letting $T(x)=T_{\alpha, \beta}(x)$
\[
\lim_{\xi\to 0^+} \int_{-1}^1 \ln(|T'|)f_{\xi}dm = \int_{-1}^1 \ln(|T'|)f_0 dm.     
\]
\end{corollary}
\begin{proof}
By direct computation
\begin{align*}
&\left|\int_{-1}^1 \ln(|T'|)(f_{\xi}-f_0) dm\right|=\left| \int_{-1}^1 \ln(|T'|)\Pi_*(f_{\xi}-f_0) dm\right| \\
&= \left|\sum_{k\in \mathbb{N}}\int_{\{k\}\times B_k}\ln(|T'(\Pi(\hat{x})|)(\hat{f}_{\xi}-\hat{f}_0) d\hat{m}(x)\right|\leq ||\ln(|T'|)||_{L^1([-1,1])}||\hat{f}_{\xi}-\hat{f}_0||_{BV(\hat{I})},
\end{align*}
where $\hat{m}$ is the Lebesgue measure on $\hat{I}$,
which implies the thesis since $\hat{f}_{\xi}$ converges to $\hat{f}_{0}$ in $BV(\hat{I})$. 
\end{proof}

\begin{corollary}[Corollary of \cite{BaVi}]\label{cor:bavi}
Let $T_{\alpha,\beta}$; fix $\alpha\geq 2$ and let the mother kernel be $\rho(x) = \chi_{[-1,1]}$, i.e, 
the noise in our random dynamical system is the uniform noise.
Then $\beta=1$ is a density point for the set of parameters
$\Omega$ for which there exists a $\xi_0>0$ such that:
\begin{enumerate}
\item for all $\xi\in [0,\xi_0)$ there exists a unique stationary measure $\mu_{\xi}$,
\item the density of the stationary measure $\mu_{\xi}$ converges to the density of the deterministic system in $L^1$ as $\xi$ goes to $0$ 
(strong stochastic stability),
\item $\int_{-1}^1\ln(|T'_{\alpha,\beta}|)d\mu_{\xi}$ is a continuous function of the noise amplitude in $[0,\xi_0)$,
\item there exists $\tilde{C}>0, 0<\tilde{\theta}<1$ such that for all $\xi\in [0, \xi_0)$
\[
||P^n_{\xi}||_{BV}\leq \tilde{C}\tilde{\theta}^n.      
\]
\end{enumerate}
In particular, hypothesis D1, D2, R1, R2 and R4 of Theorem \ref{thm:result} are satisfied.
\end{corollary}
\begin{remark}
All the arguments presented in the sketch of the proof above are already 
known in literature, see \cite{Me}.
\end{remark}

We prove now that hypothesis D3 is also satisfied.
\begin{lemma}\label{lemma:derlp}
For $\alpha\in [2,+\infty)$, $\beta\in (0,1]$ 
\[
\ln(|T'_{\alpha, \beta}|)\in L^p([-1,1]),     
\]
for all $p\geq 1$.
\end{lemma}
\begin{proof}
Follows by computation; let $x<0$, the $x>0$ case is analogous.
\[
T'_{\alpha, \beta}(x) = 2\beta \alpha (-x)^{\alpha-1},    
\]
therefore 
\[
\ln(|T'_{\alpha, \beta}|) = \ln(2)+\ln(\beta)+\ln(\alpha)+(\alpha-1)\ln(|x|),     
\]
which is in $L^p([-1, 1])$ for all $p\geq 1$ since $\ln(|x|)$ is in $L^p([-1,1])$ for all $p\geq 1$.
\end{proof}

We need now to identify under which conditions hypothesis R3 is satisfied.

\subsection{Large noise limit}
By corollary \ref{cor:bignoise} as the amplitude of the noise $\xi$ grows, 
we have that $f_{\xi}$ converges to the uniform density on $[-1,1]$.

Fixed $\beta=1$ we define the following function, the large noise limit of
the Lyapunov exponent of $T_{\alpha,\beta}$:
\[
\Lambda(\alpha) = \int_{-1}^1 \ln(|T'_{\alpha}|)\frac{dm}{2} = \ln(2)+\ln(\alpha)+1-\alpha
\]
This is a decreasing function of $\alpha$, for $\alpha\geq 2$, moreover $\Lambda(2)>0$ and the function
$\Lambda$ has a zero $\tilde{\alpha}$ contained in the interval $[2.67834, 2.67835]$
\footnote{obtained with Julia ValidatedNumerics package}.
A plot of $\Lambda$ is found in figure \ref{fig:exponent}.

\begin{figure}
 \centering
 \includegraphics[width=100mm]{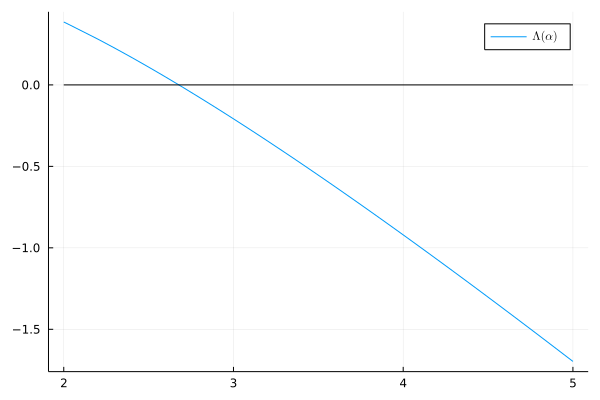}
 \caption{The graph of $\Lambda(\alpha)$ \label{fig:exponent}}
\end{figure}

\begin{corollary}
For $\alpha>\tilde{\alpha}$, the map $T_{\alpha, 1}$ presents Noise Induced Order.
\end{corollary}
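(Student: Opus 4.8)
The plan is to verify that the map $T_\alpha:=T_{1,\alpha}$ satisfies hypotheses D1, D2, R1, R2 and R3 of Theorem~\ref{thm:result} and then simply invoke that theorem. I would organise the verification so that the only genuine computation is the one behind R3, the ``easily checkable'' hypothesis; everything else is bookkeeping built on results already established above.

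First I would settle the deterministic hypotheses and place $T_\alpha$ in the scope of Theorem~\ref{thm:shen}. Since $T_\alpha(0)=-1$, $T_\alpha(-1)=1$ and $T_\alpha(1)=1$, the critical orbit is the finite set $\{-1,1\}$, which stays at distance $1$ from the unique critical point $0$, and $1$ is a repelling fixed point because $|T_\alpha'(1)|=2\alpha>1$. Hence $T_\alpha$ is a full-branch Misiurewicz map with a non-flat critical point of order $\alpha$, and it has no attracting or neutral cycles. The classical theory of Misiurewicz maps of the interval then provides a unique absolutely continuous invariant measure $\mu_0$ with density $f_0$ (hypothesis D1) whose Lyapunov exponent $\int\log|T_\alpha'|\,d\mu_0$ is strictly positive (hypothesis D2). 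The same preperiodicity shows that $|DT_\alpha^n(v)|$ grows geometrically along the critical value $v=-1$, so the summability condition of exponent $1$ holds, and the full-branch (Markov) structure makes $\mu_0$ equivalent to Lebesgue on all of $[-1,1]$ and ergodic, so every deterministically invariant set has full or zero Lebesgue measure; with the regularity of $T_\alpha$ this is exactly the hypothesis package of Theorem~\ref{thm:shen}.

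Next I would read off R1 and R2 from stochastic stability. Theorem~\ref{thm:shen} yields $\xi_0>0$ such that for every $\xi\in(0,\xi_0]$ the operator $L_\xi$ has a unique stationary density $f_\xi$, and $f_\xi\to f_0$ in $L^1$ as $\xi\to0$. For R2 I would exploit the fixed point at $1$: since $T_\alpha(1)=1$ one has $|T_\alpha(1)-1|=0<\xi_1/2$ for every $\xi_1>0$, and $1$ lies in the support of $\mu_{\xi_1}$ because the additive noise spreads mass over all of $[-1,1]$; so for any $\xi_1\in(0,\xi_0)$ the lemma of Section~\ref{sec:ss_bavi} on exponential decay of correlations gives $C>0$ and $\theta<1$ with $\|L^n_{\xi_1}|_{V_0}\|_{BV}\le C\theta^n$, which is R2. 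Feeding R2 into Lemma~\ref{lemma:epsilon_small} and Lemma~\ref{lemma:continuity} makes $\xi\mapsto f_\xi$, hence $\lambda(\xi)$, continuous on $(0,\xi_0)$. The remaining point for R1 is continuity at $\xi=0$: there $f_\xi\to f_0$ only in $L^1$, while $\log|T_\alpha'|$ is unbounded near the critical point, so one must control $\int_{|x|<\delta}\bigl|\log|x|\bigr|\,f_\xi\,dm$ uniformly in $\xi$ --- for instance through a uniform $L^p$ ($p>1$) bound on the densities $f_\xi$ near $0$ combined with Hölder's inequality and a Vitali-type argument. I expect this last estimate to be the main obstacle of the proof; the rest reduces to quoting earlier results.

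Finally, R3 is the computation displayed just before the statement. The integrand $\log|T_\alpha'(x)|=\log(2\alpha)+(\alpha-1)\log|x|$ is integrable on $[-1,1]$, so $-\infty<\int\log|T_\alpha'|\,dm$, and integrating gives $\int\log|T_\alpha'|\,dm=\tilde{\lambda}(\alpha)=\ln2+\ln\alpha+1-\alpha$. Since $\tilde{\lambda}'(\alpha)=1/\alpha-1<0$ for $\alpha>1$ and $\tilde{\lambda}(\tilde{\alpha})=0$ by the definition of $\tilde{\alpha}$, we get $\int\log|T_\alpha'|\,dm<0$ for every $\alpha>\tilde{\alpha}$, which is R3. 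With D1, D2, R1, R2 and R3 verified, Theorem~\ref{thm:result} applies and $T_\alpha$ exhibits Noise Induced Order.
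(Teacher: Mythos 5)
Your proof is correct and follows essentially the same route as the paper: D1, D2 and R1 are drawn from Shen's stochastic stability theorem (the summability condition holding because the critical orbit lands on the repelling fixed point), R2 from the fixed point $T_\alpha(1)=1$ lying in the support of the stationary measure together with the decay-of-correlations lemma of Section 3, and R3 from the explicit computation $\tilde{\lambda}(\alpha)=\ln 2+\ln\alpha+1-\alpha<0$ for $\alpha>\tilde{\alpha}$. The one point where you are more careful than the paper is R1: the paper simply asserts that stochastic stability gives continuity of $\lambda$ on $[0,\xi_0)$, whereas you rightly observe that $L^1$ convergence $f_\xi\to f_0$ does not by itself control $\int\log|T_\alpha'|\,(f_\xi-f_0)\,dm$ near the critical point where the observable is unbounded, so the uniform integrability estimate you sketch (or an appeal to a correspondingly stronger form of Shen's result) is genuinely needed to close that step.
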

\begin{proof}
By Corollary \ref{cor:bavi}, hypothesis D1, D2, R1, R2, R4 are satisfied for $T_{\alpha, 1}$.
By Lemma \ref{lemma:derlp} hypothesis D3 is satisfied.
If $\alpha>\tilde{\alpha}$ hypothesis R3 is also satisfied, and by Theorem \ref{thm:result}
we have the thesis.
\end{proof}

\subsection{Behavior as the parameter $\beta$ varies}
In this section we study the behavior of the Lyapunov exponent of $T_{\alpha, \beta}$ in presence of 
noise, when we fix $\alpha$ and vary $\beta$.

We extend the large noise amplitude limit function to allow also $\beta$ to vary;
by a simple computation
\[
\Lambda(\alpha, \beta) := \int_{-1}^1 \ln(|T'_{\alpha, \beta}|)dm = \ln(2)+\ln(\alpha)+1-\alpha+\ln(\beta).
\]
Since $\beta$ belongs to $(0,1]$, this is an increasing function of $\beta$, so, if $\alpha>\tilde{\alpha}$ and 
$\beta<1$ we have that the Lyapunov exponent for big noise sizes is negative.

\begin{lemma}\label{lemma:infdist}
Let $T_{\alpha, \beta}$ be the map defined in equation \eqref{eq:model}.
Then, for $h>0$ we have that
\[
|T_{\alpha, \beta+h}(x)-T_{\alpha, \beta}(x)|\leq 2h|x|^{\alpha}\leq 2h.
\]
\end{lemma}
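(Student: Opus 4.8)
The plan is to prove this by a completely elementary computation: expand the difference of the two maps, split it into a piece that isolates the dependence on the multiplier (the parameter $h$) and a piece that isolates the dependence on the exponent (the parameter $k$), and bound each separately using only that $|x|\le 1$ on $[-1,1]$ and that $\beta\in[0,1]$.

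Concretely, since $T_{\alpha,\beta}(x)=2\beta|x|^{\alpha}-1$ the additive constant cancels and
\[
T_{\alpha+k,\beta+h}(x)-T_{\alpha,\beta}(x)=2(\beta+h)|x|^{\alpha+k}-2\beta|x|^{\alpha}.
\]
First I would add and subtract the cross term $2\beta|x|^{\alpha+k}$, so that
\[
T_{\alpha+k,\beta+h}(x)-T_{\alpha,\beta}(x)=2h\,|x|^{\alpha+k}+2\beta\,|x|^{\alpha}\bigl(|x|^{k}-1\bigr),
\]
where the first summand is the contribution of varying $\beta$ and the second that of varying $\alpha$. Note that this choice of decomposition is important: the surviving coefficient in front of the exponent term is $\beta$, which is bounded by $1$, rather than $\beta+h$, which need not be. Now I would apply the triangle inequality. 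For the first summand, $h\ge 0$ and $|x|\le 1$ give $|x|^{\alpha+k}\le|x|^{\alpha}$, hence $2h|x|^{\alpha+k}\le 2|h|\,|x|^{\alpha}$. For the second summand, $0\le\beta\le 1$ and $|x|\le 1$ give $\bigl|\,|x|^{k}-1\,\bigr|=1-|x|^{k}$, hence $\bigl|2\beta|x|^{\alpha}(|x|^{k}-1)\bigr|\le 2|x|^{\alpha}\bigl(1-|x|^{k}\bigr)$. Finally, writing $1-|x|^{k}=k\cdot\frac{1-|x|^{k}}{k}$ (for $k>0$; when $k=0$ both sides of the claimed inequality degenerate to $2|h|\,|x|^{\alpha}$, and $\frac{1-|x|^{k}}{k}\to|\ln|x||$) produces exactly the displayed bound.

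I do not expect a genuine obstacle: the only points requiring care are bookkeeping ones, namely keeping track of the sign of $|x|^{k}-1$ on $[-1,1]$, arranging the decomposition so that the constraint $\beta\le 1$ is the one that gets used, and noting that the $k=0$ case is degenerate. The role of this estimate is to control $||T_{\alpha+k,\beta+h}-T_{\alpha,\beta}||_{\infty}$ so that it can be fed into Lemma~\ref{prt}; the right-hand side is uniformly small, of order $O(|h|)+O(k)$ once one maximizes the elementary function $|x|^{\alpha}(1-|x|^{k})$ over $[-1,1]$, which is all that the subsequent perturbative argument for the parameter dependence of $\lambda$ will need.
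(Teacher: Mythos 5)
Your proof is correct and follows essentially the same route as the paper's: add and subtract the cross term $2\beta|x|^{\alpha+k}$, apply the triangle inequality, and use $|x|\le 1$ and $\beta\le 1$ to bound the two pieces. You are in fact slightly more careful than the paper, which leaves the factor $\frac{|x|^k-1}{k}$ (nonpositive for $|x|<1$) without an absolute value and omits the degenerate case $k=0$; your observation that the bound should read $\frac{1-|x|^k}{k}$ is the right reading of the intended statement.
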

\begin{proof}
We will prove the inequality on $[0,1]$, the conclusion follows by symmetry:
\[
|2(\beta+h)x^{\alpha}-2\beta x^{\alpha}|\leq 2h|x|^{\alpha}.
\]
\end{proof}
\begin{remark}\label{rem:infdist}
It is possible to compute an estimate also as $\alpha$ varies, proving 
the inequality on $[0,1]$, the conclusion follows by symmetry:
\[
|2(\beta+h)x^{\alpha+k}-2\beta x^{\alpha}|\leq |2(\beta+h)x^{\alpha+k}-2\beta x^{\alpha+k}|
+|2\beta x^{\alpha+k}-2\beta x^{\alpha}|;
\]
we focus now on
\[
|2\beta x^{\alpha+k}-2\beta x^{\alpha}| =k 2\beta|x|^{\alpha}\frac{|x|^k-1}{k};
\]
the inequality is written in this specific form
\[
k |x|^{\alpha}\frac{|x|^k-1}{k} = k |x|^{\alpha}\ln|x|+O(k^2),
\]
therefore, for small $k$ we have that, for some constant $C$
\[
||T_{\alpha+k,\beta+h}-T_{\alpha,\beta}||_{\infty}\leq |h|+C|k|,
\]
since $|x|^{\alpha}\ln(|x|)$ goes to $0$ for $x\to 0$ and has bounded derivative in $[0,1]$ for any $\alpha> 1$.
\end{remark}

\begin{definition}
Fix $\alpha\geq 2$. In the following we will denote by $L_{\xi, \beta}$ 
the annealed transfer operator of $T_{\alpha, \beta}$ with noise amplitude $\xi$, 
$P_{\xi, \beta}$ the associated 
annealed Perron-Frobenius operator.
If a unique stationary measure exists, we will denote it by $\mu_{\xi, \beta}$
and its density by $f_{\xi,\beta}$.
\end{definition}

\begin{corollary}\label{cor:changebeta}
Suppose there exist $\beta, \xi$ such that $P_{\xi, \beta}$ contracts the space of 
average $0$ function in $L^1$; then there exists an $\epsilon>0$ such that for all 
$0<h<\epsilon$ the operator $P_{\xi, \beta+h}$ contracts the space of average $0$ 
functions in $L^1$.
\end{corollary}
\begin{proof}
This follows from Lemma \ref{lem:contmap} and Lemma \ref{lemma:infdist};
if $C, \theta$ are the contraction constants of $P_{\xi, \beta}$ then 
\begin{align*}
||P^n_{\xi,\beta+h}|_{\mathcal{U}_0}||_{L^1\to L^1}&\leq ||P^n_{\xi,\beta}|_{\mathcal{U}_0}||_{L^1\to L^1}+||P^n_{\xi,\beta+h}-P^n_{\xi,\beta}||_{L^1\to L^1}\\
&\leq C\theta^n+3 \frac{C}{1-\theta} h ||\rho_{\xi}||_{BV}.
\end{align*}
If $N$ is such that $C\theta^N<1$ and $h$ is small enough, this implies that 
\[||P^n_{\xi,\beta+h}|_{\mathcal{U}_0}||_{L^1\to L^1}<1.\] 
\end{proof}

\begin{corollary}\label{cor:distbeta}
Suppose there exist $\beta, \xi$ such that $P_{\xi, \beta}$ contracts the space of 
average $0$ function in $L^1$ with constants $C,\theta$;  then if $h$ is small enough
\[
||f_{\xi,\beta+h}-f_{\xi,\beta}||_{L^1}\leq 6 h ||\rho_{\xi}||_{BV} \frac{C}{1-\theta}
\]
\end{corollary}
\begin{proof}
From Corollary \ref{cor:changebeta}, we get that $P_{\xi,\beta+h}$ contracts the space
of average $0$ functions in $L^1$, therefore there exists a unique stationary density
for $P_{\xi,\beta+h}$.

Let $N$ such that $C\theta^N<1/2$, then
\begin{align*}
||f_{\xi, \beta+h}-f_{\xi, \beta}||_{L^1}&\leq ||P_{\xi, \beta}^n(f_{\xi, \beta+h}-f_{\xi, \beta})||_{L^1}+||(P_{\xi, \beta}^n-P_{\xi, \beta+k}^n) f_{\xi, \beta+h}||_{L^1}\\
&\leq \frac{1}{2}||f_{\xi, \beta+h}-f_{\xi, \beta}||_{L^1}+||(P_{\xi, \beta}^n-P_{\xi, \beta+k}^n) f_{\xi, \beta+h}||_{L^1}
\end{align*}
as in the proof of Lemma \ref{lemma:continuity}, and therefore, since $||f_{\xi, \beta+h}||_{L^1}=1$ we have
\[
||f_{\xi, \beta+h}-f_{\xi, \beta}||_{L^1}\leq 6 h ||\rho_{\xi}||_{BV} \frac{C}{1-\theta}.
\]
\end{proof}

\begin{lemma}\label{lemma:Holder}
Fix $\alpha\geq 2$ and let
\[
\lambda_{\xi}(\beta):=\int_{-1}^1 \ln(|T'_{\alpha,\beta}|)f_{\xi, \beta} dm.
\]
If $P_{\xi_0, \beta_0}$ contracts the space of average $0$ functions in $L^1$, then 
the function $\lambda_{\xi_0}$ is defined at $\beta_0$ and it is H\"older continuous with respect to $\beta$ at  
$\beta_0$.
\end{lemma}
\begin{proof}
We observe now that, by H\"older inequality
\[
||f_{\xi,\beta+h}-f_{\xi,\beta}||_{L^r}\leq (||f_{\xi,\beta+h}-f_{\xi,\beta}||_{L^1})^{1/r}(||f_{\xi,\beta+h}-f_{\xi,\beta}||_{L^{\infty}})^{1-1/r},
\]
and that
\[
||f_{\xi,\beta+h}-f_{\xi,\beta}||_{L^{\infty}}\leq 6 ||\rho_{\xi}||_{BV}
\]
Since $\ln|x|$ is in $L^p([-1,1])$ for $p>1$, the result follows.
\end{proof}
\begin{remark}
Under stronger hypothesis on the noise kernel it is possible to prove further regularity 
results on 
\[
\lambda(\xi, \alpha, \beta) = \int_{-1}^1 \ln(|T'_{\alpha,\beta}|)d\mu_{\xi, \alpha, \beta},
\] 
where the function above is defined if there is a unique stationary measure 
for $\mu_{\xi, \alpha, \beta}$ for the annealed transfer operator of $T_{\alpha, \beta}$,
using the linear response theory for random dynamical systems developed in \cite{GaGiu, GaSe}.
\end{remark}

\begin{corollary}
Fixed $\alpha\geq\tilde{\alpha}$ there exists an $\epsilon(\alpha)$ such that
for all $\beta\in (1-\epsilon(\alpha), 1]$ the map $T_{\alpha,\beta}$ 
presents Noise Induced Order.
\end{corollary}
\begin{proof}
Let $\beta=1$. This is the full branch case, by the results in Subsection \ref{subsec:stochastic}
we know that there exists an interval $[0,\xi_0)$ and $C>0,0<\theta<1$ 
such that for all $\xi\in [0,\xi_0)$, we have that 
$||P^n_{\xi,1}|_{\mathcal{U}_0}||\leq C\theta^n$.

Therefore the stationary measure 
$\mu_{\xi, 1}$ is unique for all $\xi \in [0,\xi_0)$ and by Subsection \ref{subsec:stochastic}
there is a $\xi_1$ such that for all $\xi \in [0,\xi_1]$ we have that
\[
\int_{-1}^1 \ln(|T'_{\alpha, 1})d\mu_{\xi, 1}>0.
\]

Let $\hat{\xi} = \min(\xi_0, \xi_1)$; fix a $\xi\in [0,\hat{\xi})$, and let $\epsilon_0$
such that for all $h<\epsilon_0$ the operator $P_{\xi,1-h}$ contracts the space of average
$0$ functions in $L^1$; this $\epsilon_0$ exists by Corollary \ref{cor:changebeta} and depends on $\alpha$.
Then by Corollary \ref{cor:distbeta}, we have that for all $h \in [0, \epsilon_0)$  
\begin{align*}
&\left|\int_{-1}^1 \ln(|T'_{\alpha, 1}|)f_{\xi, 1}dm-\int_{-1}^1 \ln(|T'_{\alpha, 1-h}|)f_{\xi, 1-h}dm\right|\\ 
\leq& \left|\int_{-1}^1 \ln(|T'_{\alpha, 1}|)f_{\xi, 1}dm-\int_{-1}^1 \ln(|T'_{\alpha, 1-h}|)f_{\xi, 1}dm\right|+\\
&\left|\int_{-1}^1 \ln(|T'_{\alpha, 1-h}|)f_{\xi, 1}dm-\int_{-1}^1 \ln(|T'_{\alpha, 1-h}|)f_{\xi, 1-h}dm\right|\\
\leq& \ln(1-h)+2 h ||\ln(|T'_{\alpha, 1-h}|)||_{L^1} 3||\rho_{\xi}||_{BV} \frac{C}{1-\theta}. 
\end{align*}

Therefore, for the $\xi$ fixed above there exists an $\epsilon_1<\epsilon_0$, depending on $\alpha$, 
such that for all $h\in [0, \epsilon_1)$
\[
\int_{-1}^1 \ln(|T'_{\alpha, 1-h}|)f_{\xi, 1-h}dm>0.
\] 
 
Recall now that the big noise amplitude limit of the Lyapunov exponent for $T_{\alpha, 1-h}$ 
is given by 
\[
\int_{-1}^1 \ln(|T'_{\alpha, 1-h}|)\frac{dm}{2}=\ln(2)+\ln(\alpha)+1-\alpha+\ln(1-h);
\]
therefore, if $\alpha>\tilde{\alpha}$, there exists an $\epsilon_2$ (depending on $\alpha$) 
such that for all $h\in [0,\epsilon_2)$ the big noise amplitude limit is negative.

Let $\epsilon = \min(\epsilon_1, \epsilon_2)$ then, for all $\beta\in (1-\epsilon, 1]$ 
the Lyapunov exponent at noise amplitude $\xi$ is positive and the big noise amplitude limit 
is negative, therefore, we have Noise Induced Order.
\end{proof}

\subsection*{Data availability.}
The script used to produce the data, the data, and a Jupyter notebook 
used for the plot in Figure \ref{fig:Lyap5} can be found at 
\url{https://github.com/orkolorko/UnimodalNIO}.

\subsection*{Acknowledgements.}

The author thanks Y. Sato, M. Benedicks, M. Monge and S. Galatolo
for introducing the problem, pointing in the right direction and giving many of 
the tools. The author thanks warmly E. Ghys and A. Blumenthal for reading the paper, posing questions and providing ideas.

The author thanks the anonymous referees, whose questions and comments led  
to a rewrite of the article, which we hope is clearer.

This paper is dedicated to W. Tucker in occasion of his 50th birthday.

\subsection*{Compliance with Ethical Standards}
The author thanks the ICTP for the hospitality and was partially supported by CNPq, University of
Uppsala and KAW grant 2013.0315. UFRJ, CAPES (through the programs PROEX and the CAPES-STINT project "Contemporary topics in non uniformly hyperbolic
dynamics").

The author is currently under ``Afastamento do país para qualificação profissional, apresentação de trabalhos técnico-científicos e colaboração institucional do pessoal docente e técnico-administrativo'' 
from UFRJ and is currently a Specially Appointed Associate Professor at Hokkaido University.

The author would like to thank Prof. Hiroki Sumi and Kyoto university for their hospitality 
during the final revision of this article.

The author has no competing interests to declare that are relevant to the content of this article.

\bibliographystyle{abbrv}
\bibliography{UnimodalNIO}

\end{document}